\def\RR{\mathbb{R}}
\def\uu{\text{\bf u}}
\def\vv{\text{\bf v}}
\def\WW{\text{\bf W}}
\def\ff{\text{\bf f}}
\theoremstyle{theorem}
\newtheorem{theorem}{Theorem}
\theoremstyle{corollary}
\theoremstyle{proposition}
\newtheorem{proposition}{Proposition}
\theoremstyle{lemma}
\newtheorem{lemma}{Lemma}
\theoremstyle{definition}
\theoremstyle{condition}
\newtheorem{condition}{Condition}
\theoremstyle{Assumption}
\theoremstyle{remark}
\newtheorem{remark}{Remark}
\theoremstyle{claim}
\theoremstyle{example}
\theoremstyle{obs}
\begin{document}


\begin{center}
\begin{Large}{\bf Thermoacoustic tomography for an integro-differential wave equation modeling attenuation}\end{Large}\\
\vspace{1em}

{\bf Benjam\'in Palacios$^1$ \& Sebasti\'an Acosta$^2$}
\end{center}
{\small 
\begin{tabular}{rl}
$^1$ &\hspace{-3mm}Department of Mathematics, University of Washington, Seattle, WA, USA\\ 
&\hspace{-3mm}{\it Email address:} \texttt{bpalacio@uw.edu}\\
$^2$ &\hspace{-3mm}Department of Pediatrics - Cardiology, Baylor College of Medicine, TX, USA\\
&\hspace{-3mm}{\it Email address:} \texttt{sacosta@bcm.edu}
\end{tabular}
}

\renewcommand{\abstractname}{} 
\begin{abstract} 
\noindent{\sc Abstract.} In this article we study the inverse problem of thermoacoustic tomography (TAT) on a medium with attenuation represented by a time-convolution (or memory) term, and whose consideration is motivated by the modeling of ultrasound waves in heterogeneous tissue via fractional derivatives with spatially dependent parameters. Under the assumption of being able to measure data on the whole boundary, we prove uniqueness and stability, and propose a convergent reconstruction method for a class of smooth variable sound speeds. By a suitable modification of the time reversal technique, we obtain a Neumann series reconstruction formula.

\end{abstract}

\noindent{\it Key words:} Multiwave imaging; wave equation; integro-differential equations; attenuation; memory\\


\section{Introduction}

It is well known that for biological tissues the attenuation of acoustic waves is frequency-dependent.  One way to model this attenuation is to use fractional time derivatives and consequently the representation of the propagation of ultrasound waves by integro-differential equations. Examples of this modeling are frequency power-law attenuation or fractional Szabo models (see for instance \cite{1994Szabo,chen2003modified,2006Patch,2010TreebyCox,2011Roitner,2012PATatt,2012PATatt,2011Ammari}) where the traveling wave may be assumed to satisfy an equation of the form 
\begin{equation*}\label{TATfracderiv}
\gamma^{-2} \partial_t^2u - \Delta u + \beta \partial^{k+\alpha}_{t}u = F(t,x),\quad \text{for some}\quad\alpha\in(0,1),\; k=1,2,
\end{equation*}
and where the fractional derivative term can be written as a convolution in time $\beta(x)\partial^{k+\alpha}_{t}u = \int^t_{-\infty}\Psi_\alpha(t-s,x)\partial^{k+1}_su(s,x)ds$. Assuming, as in thermoacoustics, that the wave field vanishes for negative times, and provided that the kernel is bounded and regular enough, we can perform integration by parts and write the previous integral as a convolution of $u$ with a different kernel, plus time-derivatives of $u$ up to order two. In the case $k=2$, the sound speed is perturbed resulting in a different speed $c^{-2} = \gamma^{-2} + \beta\Psi_\alpha(0)$, which requires conditions on $\beta$ and $\Psi_\alpha$ in order to get an effective wave speed $c>0$. We point out there is a recent definition for derivatives of fractional order which employs such continuous and bounded kernels \cite{2015Caputo}.

In the present article, we study the inverse problem of finding the initial source $f$ in an attenuating medium, provided boundary data $u|_{[0,T]\times\partial\Omega}$ and where the acoustic wave $u$ is assumed to satisfies the system
\begin{equation}\label{TAT0}
\left\{\begin{array}{ll} \partial_t^2u - c^2\Delta u + a \partial_{t}  u + bu + \int^t_{-\infty}\Phi(t-s,x)u(s,x)ds = \delta'(t)f(x),&\in\RR\times\RR^{n}\\ u(t,x)=0,&t<0.\end{array}\right.
\end{equation}
We suppose $a,b,c\in C^\infty(\RR^n)$, $\Phi\in C^2(\RR^{n+1})$, $a,b\geq 0$, $c_{0}^{-1} \geq c \geq c_{0} > 0$, and for a fixed open bounded set $\Omega\subset\RR^n$ with smooth boundary, we suppose $a= b=c-1=\Phi = 0$ in $\RR^n\backslash\bar{\Omega}$. 
We shall use the following notation throughout the paper: 
$$P_\Phi := \partial_t^2-c^2\Delta + a\partial_t + b +\Phi*\cdot,\quad \Phi*u = \int^t_0\Phi(t-s,x)u(s,x)ds.$$
Then $P_\Phi = \partial_t^2-c^2\Delta$ outside the domain of interest $\Omega$. 
The Cauchy problem associated with \eqref{TAT0} is
\begin{equation}\label{TAT1}
\left\{\begin{array}{ll} P_\Phi u = 0,&(t,x)\in(0,\infty)\times\RR^{n},\\ u|_{t=0}=f,\\ \partial_tu|_{t=0}=-af,\end{array}\right.
\end{equation}
since any solution of \eqref{TAT1} extended by zero to $(-\infty,0)\times\RR^n$ is a solution of \eqref{TAT0}. Indeed, given a smooth solution $u$ of \eqref{TAT1} we consider $H(t)u(x,t)$ where $H(t)$ is the Heaviside function. Then, we can pull out the Heaviside function from the convolution since it integrates on the interval $(0,t)$, thus we get
\begin{align*}
P_\Phi(Hu) &= u\delta' + 2(\partial_tu)\delta  + au\delta + (P_\Phi u)H 
\end{align*}
with the last term 
vanishing since $P_\Phi u = 0$. 
For an arbitrary test function $\phi\in C^\infty_c(\RR^{n+1})$ we have the following,
\begin{align*}
\langle P_\Phi(Hu) , \phi\rangle &= \int_{\RR^{n}}\big[-(\partial_t u)\phi - u(\partial_t\phi) + 2(\partial_tu)\phi + au\phi\big]\big|_{t=0}dx\\
&= -\int_{\RR^n}u\partial_t\phi|_{t=0}dx\\
&=\langle f\delta',\phi\rangle, 
\end{align*}
which is the same as problem \eqref{TAT0}.




The thermoacoustic tomography problem in a medium with convolution-type attenuation can be modeled by the following initial value problem (IVP):
\begin{equation}\label{TAT2}
\left\{\begin{array}{ll} P_\Phi u(t,x) = 0,&(t,x)\in(0,T)\times\RR^{n}\\ u|_{t=0}=f,\\ \partial_tu|_{t=0}=-af,\end{array}\right.
\end{equation}
where we aim to recover the initial source $f$ from boundary measurements $u|_{(0,T)\times\partial\Omega}$, assuming the waves propagate freely in the space, that is, we suppose the boundary of $\Omega$ does not interact with the outgoing waves. This last assumption has been considered for instance in \cite{TATbrain,Homan,2016Palacios}.

The problem of thermoacoustic tomography has been broadly studied by many authors. Several reconstruction methods have been proposed for homogeneous media \cite{Finch_Rakesh_2004,2009Kunyansky,2007Finch,Kunyansky2011,WangAnastasio2012,Natterer2012,
Palamodov2012,Haltmeier2014}, and also for heterogeneous media \cite{Anastasio-2007, hristova2008reconstruction, 2009Hristova, TAT, TATbrain, Tittelfitz-2012, HuangWaNie2013, 2012PATatt, 2015AcostaMontalto, 2015StefanovYang,2015Nguyen,2016Oksanen,2015StefanovYang}. See also the reviews \cite{Agranovsy-2007,Kuchment2011,Bal-2011} for additional references. The theoretical analysis of the so-called {\it time reversal} method has gained considerable attention in the past few years, mainly due to the work of Stefanov and Uhlmann in \cite{TAT,TATbrain}. In its initial formulation, the time reversal technique gives an approximate solution that converges to the exact one as the observation time increases. The problem of recovering the initial source for optimally short measurement time was solved in \cite{TAT} for variable sound speed employing techniques from microlocal analysis. 

Recently, the focus of the mathematical analysis has been placed on extensions in the following two areas. First, there is the problem of accounting for attenuating media. Homan in \cite{Homan} gave a first extension of Stefanov and Uhlmann's work in this direction by considering the damped wave equation with sufficiently small damping coefficients for the time reversal method to work. In the complete data case, those results were extended to more general damping coefficients in \cite{2016Palacios}. In \cite{2016AcostaMontalto} the authors addressed the TAT problem with thermoelastic attenuation. Second, recent publications have addressed the TAT problem in enclosed domains to model the interaction of acoustic waves with reflectors and sensors. The advantages of working with this setting is that it naturally allows to consider partial data and the inverse problem is closely related with boundary control theory. See for instance \cite{2015AcostaMontalto,2015StefanovYang,2015Nguyen,2016Oksanen}. 

This article falls in the first group. As far as the authors know, the TAT inverse problem with attenuation of integral type and variable sound speed has not been fully considered in the literature from an analytical point of view. From a heuristic point of view, some advances have been made. For the case of constant wave speed and constant coefficient of attenuation, Modgil et al. \cite{modgil2012} designed a method based on relating the unattenuated wave field to the attenuated wave field via an integral operator and its subsequent inversion using a singular value decomposition. Treeby et al. \cite{2010TreebyCox,treeby2010} proposed a reconstruction based on time reversal and the $k$-space computational method. Attenuation compensation was achieved by separating the absorbing and dispersion terms in the wave equation, and reversing the sign of the absorbing coefficient during the time reversal. This method was modified in \cite{2012PATatt} where the coefficient of attenuation was allowed to vary within the region of interest, but the exponent of the power-law attenuation was still assumed to be constant. However, in some practical settings such as in the presence of bone and soft--tissue, the domain exhibits regions of varying power-law exponents. An appropriate method needs to be devised to avoid blurring and distortions in the reconstruction. Our work is a step in that direction, where the coefficients $a,b,c$ and the kernel $\Phi$ in (\ref{TAT0}) are allowed to vary, which effectively accounts for power-law attenuation of spatially varying exponent.

Considering attenuation terms of integral type brings some difficulties to the analysis on the propagation of waves. In particular, the equation is no longer reversible and local in time and consequently it is not possible to use techniques such as Tataru's unique continuation to get uniqueness, at least not in a direct way. Moreover, the microlocal properties of this type of integro-differential operators are not well understood. Nevertheless, it is possible to exploit the fact that an integral term of the sort considered here only presents a compact perturbation of the differential operator. This article can be view as a first attempt to understand the TAT problem in media with memory/attenuation coefficients that vary in space. A subsequent step would be to tackle viscoelastic models, and singular kernels as in the standard definition of fractional derivatives. 

The paper is structured as follows. In the next section we set the framework under which our analysis is based, such as the well-posedness of the direct problem, the energy space of initial conditions and the hypothesis on the attenuation parameters, namely the damping coefficient and the attenuation kernel. In Section 3 we prove two uniqueness results. The first one is a sharp result on uniqueness for the thermoacoustic inverse problem assuming the distance function from the boundary allows us to foliate the interior of the domain by strictly convex surfaces. In particular we require $\partial\Omega$ to be strictly convex. The second main theorem of this section, which does not require convexity of the boundary, assumes that the sound speed satisfies a frequently used condition related with the convexity of the euclidean spheres in the metric induced by the sound speed. The stability question is addressed in Section 4 and we devote Section 5 to show the existence of a Neumann series reconstruction formula. 

\section{Preliminaries}

\subsection{Direct problem}

Let $U\subset\RR^n$ be an open bounded set with smooth boundary, $u_0\in H^1_0(U)$, $u_1\in L^2(U)$ and $F\in L^2([0,T]; L^2(U))$. We say $u$ is a generalized solution of 
\begin{equation}\label{wp_eq}
P_\phi u = F \text{ in }[0,T]\times U,\quad u|_{[0,T]\times\partial U}=0,\quad u(0) = u_0,\;u_t(0) = u_1,
\end{equation}
if $u\in L^2([0,T];H^1_0(U))$, $u_t\in L^2([0,T];L^2(U))$, $u_{tt}\in L^2([0,T];H^{-1}(U))$ and 
\begin{equation}\label{def:weak_sol}
\langle c^{-2}u_{tt},\varphi\rangle + B(u,\varphi) = (c^{-2}f,\varphi)\quad \forall \varphi\in C^\infty_0(U)\text{ and for a.e. }t\in[0,T]
\end{equation}
where $\langle\cdot,\cdot\rangle$ and $(\cdot,\cdot)$ stand for the duality product of $H^{-1}$ and $H^1_0$, and the $L^2$ inner product respectively, and $B(\cdot,\cdot)$ is the bilinear form given by
$$B(u,\varphi) = (\nabla u,\nabla\varphi) + (ac^{-2}u_t,\varphi)+ (bc^{-2}u,\varphi) + (c^{-2}\Phi*u,\varphi).$$

The well-posedness follows from Theorems 2.1 and 2.2 in \cite{1970Dafermos}.
We refer to the appendix for a complete proof. In our case, by finite speed of propagation we can take $U$ to be a large ball containing $\Omega$ to ensure we have null Dirichlet conditions. 

\subsection{Energy space and positive-definite kernels}

Given a domain $U\subseteq\RR^n$ and a scalar function $u(t,x)$, we define the local energy of $\uu = [u,u_t]$ at time $t$ as
$$E_U(\uu(t)) = \int_{U}(|\nabla_x u|^2 + b|u|^2 + c^{-2}|u_t|^2)dx.$$
In order to give problem \eqref{TAT2} a physical sense we need to assume some conditions on the attenuation terms since such system must satisfies that its energy decreases over time. The previous is achieved for instance if $a(x)\geq 0$ and the kernel $\Phi$ is positive-definite, this is $\int^T_0(\Phi*y)ydt\geq 0$ for all $y\in C([0,T])$. We then assume the following:
\begin{condition}\label{cond_att}
\begin{equation}\label{hyp_att}
a(x)\geq 0\quad\text{and}\quad (-1)^j\partial^j_t\Phi (t,x) \geq 0, \;\forall t\geq0,\;x\in\RR^n,\; j = 0,1,2.
\end{equation}
\end{condition}

The previous condition guarantees the positive-definiteness of the kernel as shown in \cite{1972MacCamy} and \cite{2016IPwave_memory}. Moreover, if we define 
\begin{equation}\label{def:kernel2}
\Psi(t,x) := -\int^\infty_t\Phi(s,x)ds,
\end{equation}
it turns out that $-\Psi$ is also a positive-definite kernel since it satisfies the same condition as $\Phi$.
\begin{remark}
An example of a kernel satisfying Condition \ref{cond_att} is $\Phi(t,x) = q(x)e^{-\alpha t}$, for some positive function $q \in C(\RR^n)$ and $\alpha > 0$. In the recent article \cite{2015Caputo}, the authors introduce a new definition for fractional derivatives whose kernel is of the form $e^{-\alpha t}$. As a consequence, the analysis carried out in this paper might be applied to fractional models of wave propagation following this new definition of fractional derivatives.
\end{remark}

Under Condition \ref{cond_att} we define the extended energy functional at time $\tau>0$, analogously as in \cite{2016Palacios}, to be
\begin{equation}\label{ext_eng}
\mathcal{E}_{U}(u,\tau) = E_{U}(\uu(\tau)) +2\int_{[0,\tau]\times U}\hspace{-.5em}ac^{-2}|u_t|^2dxdt+ 2\int_{[0,\tau]\times U}\hspace{-.5em}c^{-2}\left(\Phi*u_t\right)u_tdxdt,
\end{equation}
where the last two terms take into account the portion of the energy that is lost due to the attenuation process. If we set $U=\RR^n$, or by finite propagation speed we take $U$ equal to any sufficiently large ball, in the interval $[0,T]$ the former energy functional $E_U$ is non-increasing since we get
$$\frac{d}{dt}E_U(\uu(t)) = - 2\int_{[0,\tau]\times U}\hspace{-.5em}ac^{-2}|u_t|^2dxdt- 2\int_{[0,\tau]\times U}\hspace{-.5em}c^{-2}\left(\Phi*u_t\right)u_tdxdt\leq 0,$$
and integrating in time we deduce that the extended functional is conserved.

We adopt the same functional framework as in previous articles related to thermoacoustic tomography.
The energy space $\mathcal{H}(U)$ of initial conditions is defined to be the completion of $C^\infty_0(U)\times C^\infty_0(U)$ under the energy norm
$$\|\ff\|^2_{\mathcal{H}(U)} = \int_{U}(|\nabla_x f_1|^2 + c^{-2}|f_2|^2)dx.$$
with $\ff = [f_1,f_2]$. We also let $H_D(U)$ denote the completion of $C^\infty_0(U)$ under the norm
$$\|f\|^2_{H_{D}(U)} = \int_{U}|\nabla_x f|^2dx.$$
Notice that $\mathcal{H}(U) = H_D(U)\oplus L^2(U;c^{-2}dx)$ with the latter space denoting the $L^2$ functions under the weight $c^{-2}dx$.


Denoting by $\Omega$ the region of interest and $\Sigma = [0,T]\times\partial\Omega$, we introduce the measurement operator $\Lambda_\Phi:\mathcal{H}(\Omega)\ni \ff \mapsto u|_{\Sigma}\in H^1(\Sigma)$, where $u$ satisfies \eqref{TAT2}.

\section{Uniqueness}

The first main result of this section, Theorem \ref{uniqueness}, is a uniqueness theorem for the full data case under a particular foliation condition. We work in this part with the more general hyperbolic operator \eqref{dif_op} associated to a Riemannian metric $g$. We then, assuming $g=c^{-2}dx^2$, provide a condition for the sound speed that guaranteed the existence of a particular foliation suitable for uniqueness. This is our second main result, Theorem \ref{uniqueness2}. 

Foliation conditions go back to the work on seismology of Herglotz, Weichert and Zoeppritz at the beginning of the 20th century (see \cite[\S 6]{Uhlmann2016bdry_rig} and reference therein), and have been reintroduced in the literature very recently in \cite{2013StefanovUhlmann,Uhlmann2016bdry_rig}. This type of assumptions seem to be the natural conditions under one could expect to propagate information from the exterior toward inside the domain. In particular, it has been applied before in the thermoacoustic setting \cite{2013StefanovUhlmann} to prove uniqueness for the inverse problem of recovering the sound speed (assuming the initial condition is known).

Theorem \ref{uniqueness} is a direct consequence of \cite[Theorem 1]{2007Bukhgeim}, a unique continuation result for hyperbolic equations with a memory term. For the sake of simplicity, the authors proved such result for the wave equation in an Euclidean metric. Nevertheless, in our work we need the full strength of such unique continuation, thus we have included a brief proof in the general case of waves in a general Riemannian setting. We point out that a similar method to the one used in \cite{2007Bukhgeim} was also applied in \cite{2013StefanovUhlmann} to obtain uniqueness for very general foliations and partial data. It was of fundamental importance in such proof the possibility of using a partial boundary unique continuation result independent of the foliation (see \cite[Proposition 1]{2013StefanovUhlmann}). In contrast, in our context is precisely the unique continuation the result we need to prove, which in this case is deeply linked with the foliation. As a consequence we only treat the full data case. Modifying a bit the method mentioned above we are able to prove a second uniqueness result, Theorem \ref{uniqueness2}, in the case the sound speed satisfies an specific condition.


\begin{theorem}\label{uniqueness}
Let $\Omega$ be a bounded open subset of $\RR^n$ with $\partial\Omega$ smooth and strictly convex for a Riemannian metric $g$. Let $T>0$ be such that $x^n=\text{\rm dist}(x,\partial\Omega)$ is a smooth function in $\Omega$ with non-zero differential for $0\leq x^n\leq T$ and its level surfaces $\{x^n=s\}$, for $0\leq s\leq T$, are strictly convex for the metric $g$ as well. If $f\in H_{D}(\Omega)$ is such that $\Lambda_\Phi\ff=0$ with $\ff = (f,-af)$, then $f=0$ in $\{x\in\Omega: \text{\rm dist}(x,\Omega)<T\}$. In particular, if $T\geq T_1(\Omega):= \sup_{x\in\Omega}\text{\rm dist}(x,\partial\Omega)$, then $f\equiv 0$.
\end{theorem}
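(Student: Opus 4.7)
My plan follows the paragraph immediately preceding the theorem: I reduce to a unique continuation statement for $P_\Phi$ and then apply a Carleman estimate adapted to the strictly $g$-convex foliation, in the spirit of \cite{2007Bukhgeim} and \cite{2013StefanovUhlmann}.

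\emph{Reduction to vanishing Cauchy data on $\Sigma$.} Because $a=b=\Phi=0$ and $c=1$ on $\RR^n\setminus\overline{\Omega}$, and because the initial data $\ff=(f,-af)$ vanishes outside $\overline{\Omega}$, the solution $u$ of \eqref{TAT2} (extended to $[0,T]\times\RR^n$ through free propagation) solves the unperturbed wave equation on the exterior with zero Cauchy data and zero Dirichlet data $u|_\Sigma=0$. Standard uniqueness for the exterior problem then yields $u\equiv 0$ on $[0,T]\times(\RR^n\setminus\overline{\Omega})$, so in particular $\partial_\nu u|_\Sigma=0$.

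\emph{Local unique continuation across a level surface.} Fix $s\in[0,T)$, a point $x_0\in\Gamma_s:=\{x^n=s\}$, and a time $t_0$. Assume inductively that $u$ vanishes on a space--time neighborhood of $[t_0-\rho,t_0+\rho]\times\{x^n\leq s\}$. I would choose a Carleman weight of the form
\[
\varphi(t,x)=-(x^n-s-\delta)+\alpha(t-t_0)^2-\beta|x-x_0|^2,
\]
for small $\delta,\alpha,\beta>0$. The strict $g$-convexity of $\Gamma_s$, which is exactly the foliation hypothesis, translates into the pseudo-convexity of $\varphi$ with respect to the principal symbol of $P_\Phi$, so H\"ormander's theorem gives a Carleman estimate of the form
\[
\tau\int e^{2\tau\varphi}\bigl(|\nabla_{t,x}u|^2+\tau^2|u|^2\bigr)\,dt\,dx \leq C\int e^{2\tau\varphi}|P_0 u|^2\,dt\,dx,
\]
where $P_0$ denotes the principal differential part of $P_\Phi$.

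\emph{Absorbing the memory term.} The non-principal part gives $P_0u=-au_t-bu-\Phi*u$. The terms $au_t$ and $bu$ are ordinary lower-order perturbations and are absorbed into the left-hand side for $\tau$ large. For the convolution, I arrange $\varphi$ to be non-increasing in $t$ over the support under consideration. Cauchy--Schwarz combined with this monotonicity gives
\[
\int e^{2\tau\varphi(t,x)}\Bigl|\int_0^t\Phi(t-r,x)u(r,x)\,dr\Bigr|^2 dt\,dx \leq C(\Phi,T)\int e^{2\tau\varphi(r,x)}|u(r,x)|^2 dr\,dx,
\]
because the weight at the earlier time $r$ dominates the weight at $t$. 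This contribution is absorbed by the $\tau^3\int e^{2\tau\varphi}|u|^2$ term on the left once $\tau$ is large enough. The estimate then forces $u\equiv 0$ on a neighborhood of $(t_0,x_0)$ inside $\{x^n>s\}$, pushing the vanishing region across $\Gamma_s$.

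\emph{Iteration and conclusion.} A compactness argument along $\Gamma_s$ combined with a continuation-in-$s$ argument as in \cite{2013StefanovUhlmann} propagates the vanishing through the full foliation $\{0\leq x^n\leq T\}$, with $\rho>0$ preserved because the time window at level $s$ shrinks by only a controlled amount. Evaluating at $t=0$ yields $f\equiv 0$ on $\{x\in\Omega:\text{\rm dist}(x,\partial\Omega)<T\}$, and the last sentence of the theorem follows at once when $T\geq T_1(\Omega)$.

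The main obstacle I anticipate is twofold and interlocked: verifying that the strict $g$-convexity of the level surfaces translates into H\"ormander-style pseudo-convexity of $\varphi$ in the Riemannian setting (since the Euclidean calculation of \cite{2007Bukhgeim} does not apply verbatim), and handling the non-local memory term uniformly in the Carleman parameter $\tau$. Both steps force the $t$-monotonicity of $\varphi$ to be compatible with its spatial pseudo-convexity, which is what dictates the particular shape of the weight above and is the key technical novelty relative to the Euclidean case.
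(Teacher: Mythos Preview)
Your overall plan---reduce to null Cauchy data on $\Sigma$, then push a Carleman estimate through the strictly convex foliation---is the paper's plan. But the single local step you propose has a genuine gap that the paper resolves only by splitting the argument into two distinct lemmas.

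The tension you flag at the end is in fact fatal to the weight you wrote. To absorb the memory term you need $\varphi(r,x)\geq\varphi(t,x)$ for every $r\in[0,t]$, i.e.\ $\varphi$ non-increasing in $t$ on the \emph{whole} interval $[0,t]$. Your weight $+\alpha(t-t_0)^2$ is a parabola with minimum at $t_0$ and is not monotone on $[0,t]$ for any $t_0>0$; replacing it by $-\alpha(t-t_0)^2$ does not help either, since that is monotone on $[0,t]$ only when $t_0=0$. The case $t_0=0$ is exactly the paper's Lemma~\ref{lemma1}, which therefore only yields vanishing in a neighborhood of $\{0\}\times\partial\Omega$. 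For generic $t_0>0$ the monotonicity requirement forces $\varphi$ to be \emph{linear} (and non-increasing) in $t$; but then the level sets of $\varphi$ are no longer strongly pseudo-convex for the full characteristic set of the wave operator, and H\"ormander's classical Carleman estimate is unavailable. The paper's Lemma~\ref{lemma2} resolves this by invoking Tataru's pseudo-differential Carleman estimate \cite{Tataru1,Tataru2}, which only demands pseudo-convexity on the elliptic set $\{\theta=0\}$ and handles the time direction through the Gaussian mollifier $E=e^{\epsilon|D_t|^2/(2\tau)}$. Absorbing the convolution inside that estimate (the term $\|Ee^{\tau\varphi}\chi(\Phi*u)\|$) is then a separate calculation, carried out in \cite{2007Bukhgeim}, which again relies on $\varphi_t\leq 0$. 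None of this can be short-circuited with a single classical H\"ormander estimate centered at $(t_0,x_0)$.

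Two smaller issues. First, Carleman estimates need $H^2$ regularity, while the solution $u$ of \eqref{TAT2} with $f\in H_D(\Omega)$ is only $C([0,T];H^1)$; the paper passes to the time-primitive $\bar u(t)=\int_0^t u(s)\,ds\in H^2(Q)$, which satisfies an equation of the same form. Second, your iteration claim that ``$\rho$ is preserved because the time window shrinks by only a controlled amount'' has to be made quantitative: the paper shows that at each spatial step the admissible time interval shrinks from $[0,T]$ to $[0,T-x^n]$, and it is precisely this bookkeeping that produces the sharp region $\{\text{\rm dist}(x,\partial\Omega)<T\}$ in the statement.
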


\begin{remark}
Under the above hypothesis, this result presents an improvement in the condition imposed on $T$ for uniqueness in the damped wave equation ($T>2T_1(\Omega)$ in \cite[Theorem 3.1]{Homan}). 
\end{remark}

Let $Q = (0,T)\times \Omega$ and $x^n = \text{dist}(x,\partial\Omega)$ the signed distance function defined in a neighborhood of the boundary and such that $\Omega$ and $\partial\Omega$ are characterized respectively by $x^n>0$ and $x^n=0$. We define the following weight function
\begin{equation}\label{varphi}
\varphi(x,t) = (R-x^n) - \alpha t^2 - r^2,
\end{equation}
which is invariantly defined for any local coordinates $(x^1,...,x^{n-1})$ in $\partial\Omega$.
Here $\alpha = \alpha(\Omega,g)>0$ is sufficiently small and $R,r>0$ will be chosen large and close to each other. For $\epsilon\geq 0$ we also consider the sets
\begin{eqnarray}\label{set1}
Q(\epsilon) &=& \{(t,x)\in Q: \varphi(x,t)>\epsilon\},\\
\Omega(\epsilon) &=& \{x\in\Omega:(R-x^n)^2>r^2+\epsilon\}. \label{set2}
\end{eqnarray}
By taking $r$ close to $R$, the set $Q(0)$ is a small neighborhood of $\{0\}\times\partial\Omega$ inside $Q$. 

We recall that in boundary normal coordinates, a Riemannian metric $g$ takes the form
\begin{equation}\label{metric_nbc}
\tilde{g}_{\alpha,\beta}(x',x^n)\text{d}x^\alpha \text{d}x^\beta + (\text{d}x^2)^2,
\end{equation}
for $\alpha,\beta\leq n-1$. We denote $\tilde{g} = (\tilde{g}_{\alpha\beta}(x))$. Moreover, the strictly convexity of the level surfaces $\{x^n=s\}$ translates into 
\begin{equation*}\label{convexity_vec}
\Pi(v,v) = \left(-\frac{1}{2}\frac{\partial \tilde{g}_{\alpha\beta}}{\partial x^n}\right)v^\alpha v^\beta \geq \kappa_s|v|^2_{\tilde{g}},\quad\forall v\in T\{x^n=s\},
\end{equation*}
with $\kappa_s>0$ the smallest eigenvalue (principal curvature) of the second fundamental form $\Pi$ in $\{x^n=s\}$, where $R_{s}=\kappa_s^{-1}$ can be think as the largest curvature radius of $\{x^n=s\}$.
The analogous condition for convectors follows from the natural isomorphism $\xi_i = g_{ij}(x)v^j$ and reads
\begin{equation}\label{convexity_covec}
\Pi(\xi,\xi) = \left(\frac{1}{2}\frac{\partial \tilde{g}^{\alpha\beta}}{\partial x^n}\right)\xi_\alpha \xi_\beta \geq \kappa_s|\xi|^2_{\tilde{g}},\quad\forall \xi\in T^*\{x^n=s\}.
\end{equation}

In what follows we consider the more general integro-differential operator
\begin{equation}\label{dif_op}
\mathcal{P}_\Phi u = u_{tt} - \partial_j(g^{ij}(x)\partial_iu) + \langle A(x),u'\rangle + b(x)u +\Phi*u,
\end{equation}
where $u'=(u_x,u_t)$, $g$ is a Riemannian metric, and the vector-function $A$, the scalar-function $b$ and the kernel $\Phi$ are continuous functions. 

\begin{remark}
The next two lemmas also hold if the coefficients $A$ and $b$ are analytic functions in $t$.
\end{remark}

\begin{lemma}\label{lemma1}
Let $\Omega$ and $T$ be as in Theorem \ref{uniqueness}. Let $f\in L^2(\Omega)$ and $u\in H^2(Q)$ be a solution of
\begin{equation}\label{eq_uniq}
\left\{\begin{array}{rl} \mathcal{P}_\Phi u=0&\text{in } (0,T)\times\Omega,\\ u|_{t=0}=0&\text{in }\Omega,\\ \partial_tu|_{t=0}=f&\text{in }\Omega.\end{array}\right.
\end{equation}
If  $u=\partial_\nu u = 0$ on $\partial Q(0)\cap\partial\Omega$, then 
$$u = 0 \text{  in  } Q(0),\text{ and in particular } f= 0\text{  in  }\Omega(0).$$
\end{lemma}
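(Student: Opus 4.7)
The plan is to prove the lemma by establishing a Carleman estimate for the integro-differential operator $\mathcal{P}_\Phi$ with the weight $\varphi$ defined in \eqref{varphi}, and then deducing unique continuation by the standard cutoff argument. The first step is to work in boundary normal coordinates, in which the metric takes the form \eqref{metric_nbc}, and derive a pointwise Carleman estimate for the principal part $L_0 u := u_{tt} - \partial_j(g^{ij}\partial_i u)$ with weight $e^{2\tau e^{\lambda\varphi}}$ for $\lambda$ large and $\tau$ eventually sent to infinity. The strict convexity of the level sets $\{x^n=s\}$, expressed through the covector inequality \eqref{convexity_covec}, is precisely what guarantees the H\"ormander pseudo-convexity of $\varphi$ with respect to the characteristic cone of $L_0$: the tangential Hessian contribution is positive by convexity, the $x^n$-derivative $\partial_{x^n}\varphi = -1$ and the small parameter $\alpha$ control the remaining terms, and the $tt$-derivative provides the sign needed to dominate the time-like directions. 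This yields an estimate of the form
\[
\tau\int_{Q} e^{2\tau\psi}\bigl(\tau^2|u|^2 + |\nabla_{t,x}u|^2\bigr)\,dx\,dt \le C\int_{Q} e^{2\tau\psi} |L_0 u|^2\,dx\,dt,
\]
for $u$ suitably supported in $Q$, where $\psi = e^{\lambda\varphi}$.

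The next step is to absorb the remaining terms in $\mathcal{P}_\Phi - L_0 = \langle A,u'\rangle + bu + \Phi\ast u$. The differential lower-order terms are controlled in the standard way by the $|\nabla_{t,x}u|^2$ and $|u|^2$ on the left for $\tau$ large. The memory term is the novel point. Since $\partial_t\varphi = -2\alpha t \le 0$, the function $t\mapsto e^{2\tau\psi(t,x)}$ is non-increasing at fixed $x$, and hence by Cauchy--Schwarz and Fubini,
\[
\int_0^T e^{2\tau\psi(t,x)} \Bigl|\int_0^t \Phi(t-s,x) u(s,x)\,ds\Bigr|^2 dt \le C(T,\Phi)\int_0^T e^{2\tau\psi(s,x)}|u(s,x)|^2\,ds,
\]
so $\|e^{\tau\psi}(\Phi\ast u)\|_{L^2(Q)}^2 \le C\|e^{\tau\psi}u\|_{L^2(Q)}^2$, which is absorbed by the $\tau^3 |u|^2$ term on the left once $\tau$ is large.

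Finally, to conclude $u\equiv 0$ in $Q(0)$, I would apply the estimate to $\chi u$, where $\chi$ is a smooth cutoff equal to $1$ on $Q(\epsilon)$ and supported in $Q(\epsilon/2)$. The commutator $[L_0,\chi]u$ is supported in $\{\epsilon/2 < \varphi < \epsilon\}$, where the weight is bounded above, while on $Q(2\epsilon)$ the weight on the left is much larger; letting $\tau\to\infty$ forces $u=0$ on $Q(2\epsilon)$ and then, by taking $\epsilon\to 0$, on all of $Q(0)$. The hypotheses $u=\partial_\nu u=0$ on $\partial Q(0)\cap\partial\Omega$ and the initial conditions $u|_{t=0}=0$, $u_t|_{t=0}=f$ ensure that integration by parts produces no boundary contributions on the lateral and bottom faces of $\partial Q(0)$; the face $\{\varphi = 0\}$ is interior to $Q$ and handled by the cutoff. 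Since $\{0\}\times\Omega(0)\subset \overline{Q(0)}$, the vanishing of $u$ in $Q(0)$ together with $u_t|_{t=0}=f$ gives $f=0$ on $\Omega(0)$. The main obstacle is the first step: verifying the pseudo-convexity of $\varphi$ for the Riemannian wave operator from the foliation assumption, which requires a careful computation of the H\"ormander bracket $\{p,\{p,\varphi\}\}$ on the characteristic set; the memory term, while non-local and non-microlocal, is fortunately tamed by the monotonicity $\partial_t\varphi\le 0$ built into our choice of weight.
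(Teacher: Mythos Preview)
Your approach is correct and follows the same strategy as the paper: a Carleman estimate for the principal wave operator with weight built from $\varphi$, absorption of the memory term via the monotonicity $\partial_t\varphi\le 0$ (so that $e^{2\tau\psi(t,\cdot)}\le e^{2\tau\psi(s,\cdot)}$ for $s\le t$, making $\|e^{\tau\psi}(\Phi*u)\|$ controllable by $\|e^{\tau\psi}u\|$), and a limiting argument in $\tau$. The differences are mainly packaging. The paper works with the bare weight $\varphi$ rather than $e^{\lambda\varphi}$ --- the large parameter $R$ in $(R-x^n)^2$ plays the role of your $\lambda$ --- and derives the pointwise inequality by the explicit conjugation decomposition $\mathcal{P}_\tau=\mathcal{P}_++\mathcal{P}_-$ instead of invoking the H\"ormander bracket. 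In place of your cutoff, the paper keeps the surface integral on $\Gamma(0)=\{\varphi=0\}$ and observes that it is \emph{independent of $\tau$}, so sending $\tau\to\infty$ forces $u=0$ on $Q(0)$ in one step. The paper also patches the local estimates (each valid in a single boundary-normal chart) over $\partial\Omega$ by a partition of unity $\{\chi_i\}$, a globalization step you would need but do not mention.

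One point in your write-up deserves care. The claim that integration by parts produces no contribution at $\{t=0\}$ does \emph{not} follow from $u|_{t=0}=0$ and $u_t|_{t=0}=f$ alone: since $f\ne 0$, the time flux could in principle contain a term in $v_t^2$. What actually makes the bottom contribution vanish is that every summand of the time flux $Z$ carries either a factor of $v$ or $v_x$ (both zero at $t=0$ because $u|_{t=0}=0$) or a factor of the time derivative of the weight; since $\varphi_t=-2\alpha t$ vanishes at $t=0$, so does $\psi_t=\lambda\psi\varphi_t$, and hence $Z|_{t=0}=0$. This is precisely why the weight was chosen even in $t$.
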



\begin{proof}
Given a point $y=(y',0)\in\partial\Omega$, let's consider local coordinates $(U,(x^1,...,x^{n-1}))$ in the boundary near $y'$. For $\epsilon\geq 0$ we define the sets
\begin{eqnarray}\label{set2}
Q_y(\epsilon) &=& \{(t,x)\in Q: \varphi(x,t)>\epsilon,\; x'\in U\},\\
\Omega_y(\epsilon) &=& \{x\in\Omega:(R-x^n)^2>r^2+\epsilon,\; x'\in U\}. \label{set2}
\end{eqnarray}
In what follows we take $r=R-\delta$, for some $\delta>0$ small enough, therefore $x^n\in[0,\delta)$ in the set $Q(0)$.

Let's first consider an arbitrary function $u\in C^\infty(\overline{Q}_y(0))$ such that $u=\partial_\nu u=0$ on $\overline{Q}_y(0)\cap\partial\Omega$, and let $\tilde{u}(t,x) = \chi(x') u(t,x)$, with $\chi\in C^\infty_0(U)$.
The idea is to obtain a well known local Carleman estimate for $\tilde{u}$ and later use it, along with a partition of unity, to get an analogous estimate in $Q(0)$.

Let's denote $\mathcal{P} = u_{tt} - \partial_j(g^{ij}(x)\partial_iu)$, the principal part of $\mathcal{P}_\Phi$. By analyzing the conjugate operator $\mathcal{P}_\tau = e^{\tau\varphi}\mathcal{P}e^{-\tau\varphi}$, it is possible to deduce (after long computations) a pointwise estimate for $v=e^{\tau\varphi}\tilde{u}$ of the form:
\begin{equation}\label{ptwise est}
\begin{aligned}
C|\mathcal{P}_\tau v|^2&\geq \tau(|v_t|^2+|v_n|^2) + \tau^3|v|^2+\text{div}_x(Y) + \partial_tZ\\
&\quad +4\tau(R-\delta)^2\left(\frac{1}{2}\partial_n\tilde{g}^{kl}v_kv_l\right) - 2\tau\gamma|v_x|^2_{\tilde{g}}
\end{aligned}
\end{equation}
for some constant $\gamma>0$ depending on the parameter $\alpha$ which is chosen small enough, and with $(Y,Z)$ a vector-valued function depending on lower order derivatives of $v$ and vanishing in $\partial Q_y(0)\backslash\{\varphi=0\}$. In fact, the previous follows by decomposing $\mathcal{P}_\tau v$ as the sum of two operators,
$$\mathcal{P}_+v = v_{tt} - \partial_j(g^{ij}\partial_iv) + \tau^2\Phi v,\quad \Phi = \varphi^2_t - |\varphi_x|^2_g$$
and
$$\mathcal{P}_-v = 2\tau\big(\langle \varphi_x,v_x\rangle_g - \varphi_t v_t\big) + \tau\Psi v,\quad \Psi = \partial_j(g^{ij}\partial_i\varphi) -\varphi_{tt},$$
and bounding from below the inequality
$$|\mathcal{P}_\tau v|^2\geq |\mathcal{P}_+v|^2 + 2(\mathcal{P}_+v)(\mathcal{P}_-v).$$
Here we apply the convexity condition on the level surfaces $\{x^n=s\}$ in \eqref{convexity_covec}. By choosing then $R$ large enough and $\delta$ small,
we arrive to the estimate
$$\tau^3|v|^2 + \tau(|v_t|^2 + |v_x|^2_g)\leq C\big(e^{\tau\varphi}|\mathcal{P}\tilde{u}|^2 - \text{div}_x(Y) - \partial_tZ\big).$$
Because $v=e^{\tau\varphi}\tilde{u}$, we can bound from below the left hand side of the previous inequality by similar terms but involving now the function $u$ (and the exponential weight function). 
Integration over $Q_y(0)$
and the Gauss-Ostrogradski\u{\i} formula give us that
\begin{equation}\label{car1}
\begin{aligned}
&\tau\int_{Q_y(0)}e^{2\tau\varphi}\big(\tau^2|\tilde{u}|^2+ |\tilde{u}_t|^2 + |\tilde{u}_x|^2_g\big)dxdt\\
&\leq C\int_{Q_y(0)}e^{2\tau\varphi}|\mathcal{P}\tilde{u}|^2dxdt+ C\int_{\Gamma_y(0)}\big(\langle X_1u',u'\rangle + \langle X_2,u'\rangle u + X_3|u|^2\big)dS, 
\end{aligned}
\end{equation}
where $dS$ denotes the surface measure on $\Gamma_y(0) = Q_y(0)\cap\{\varphi=0\}$, and the matrix-function $X_1(x,t)$, the vector-function $X_2(x,t)$, and the scalar-function $X_3(x,t)$ are some continuous functions depending on $\varphi$ and $Q_y(0)$. Using the continuity of the coefficients in the lower order terms (l.o.t) of $\mathcal{P}_\Phi$ and noticing that 
$$|\mathcal{P}\tilde{u}|^2 \leq 2|\mathcal{P}_\Phi \tilde{u}|^2 + 2|(\text{l.o.t of }\mathcal{P}_\Phi)\tilde{u}|^2,$$
we can choose $\tau_0$ larger if necessary and absorb the second summand in the right hand side above with the left hand side of \eqref{car1}. Then
\begin{equation}\label{car2}
\begin{aligned}
&\tau\int_{Q_y(0)}e^{2\tau\varphi}\big(\tau^2|\tilde{u}|^2+ |\tilde{u}_t|^2 + |\tilde{u}_x|^2_g\big)dxdt\\
&\leq C\int_{Q_y(0)}e^{2\tau\varphi}|\mathcal{P}_\Phi\tilde{u}|^2dxdt+ C\int_{\Gamma_y(0)}\big(\langle X_1u',u'\rangle + \langle X_2,u'\rangle u + X_3|u|^2\big)dS, 
\end{aligned}
\end{equation}

The analogous inequality in the larger set $Q(0)$ is obtained by considering a partition of unity and using the compactness of $\partial\Omega$. More precisely, let now $u\in C^\infty(\overline{Q}(0))$ and let $\{U_i\}_i$ be a finite covering of the boundary such that on each $U_i$ we can define boundary local coordinates, and let $\{\chi_i\}_i$ be a finite smooth partition of unity subordinate to $\{U_i\}_i$. We also consider a collection of points $y_i\in U_i$. Then, denoting $u_i=\chi_i^{1/2} u$, and the measure $d\sigma = dt d\text{Vol}(x)$ on $Q$, from the previous estimates we get
$$
\begin{aligned}
&\tau\int_{Q(0)}e^{2\tau\varphi}\big(\tau^2|u|^2 + |u_t|^2 + |u_x|^2_g\big)d\sigma\\
&\hspace{1em} =\tau\sum_i\int_{Q_{y_i}(0)}e^{2\tau\varphi}\chi_i\big(\tau^2|u|^2 + |u_t|^2 + |u_x|^2_g\big)dxdt \\
&\hspace{1em} \leq \tau\sum_i\int_{Q_{y_i}(0)}e^{2\tau\varphi}\big(\tau^2|u_i|^2 + |(u_i)_t|^2 + |(u_i)_x|^2_g\big)dxdt\\
&\hspace{5em}  + C\tau\int_{Q(0)}e^{2\tau\varphi}|u|^2d\sigma\\
&\hspace{1em} \leq C\left(\int_{Q(0)}e^{2\tau\varphi}|\mathcal{P}_\Phi u|^2d\sigma +\int_{\Gamma(0)}\big(\langle X_1u',u'\rangle + \langle X_2,u'\rangle u + X_3|u|^2\big)dS\right.\\
&\hspace{5em}\left.+ \sum_i\int_{Q(0)}e^{2\tau\varphi}|[\mathcal{P}_\Phi,\chi_i]u|^2dxdt + \tau\int_{Q(0)}e^{2\tau\varphi}|u|^2d\sigma\right),\\
\end{aligned}
$$
where notice $[\mathcal{P}_\Phi,\chi_i]$ are differential operators of order 1. We absorb the interior integrals with lower order derivatives of $u$ using the left hand side and get
\begin{equation}\label{car3}
\begin{aligned}
&\tau\int_{Q(0)}e^{2\tau\varphi}\big(\tau^2|u|^2 + |u_t|^2 + |u_x|^2_g\big)d\sigma&\\
&\hspace{1em} \leq C\int_{Q(0)}e^{2\tau\varphi}|\mathcal{P}_\Phi u|^2d\sigma+ C\int_{\Gamma(0)}\big(\langle X_1u',u'\rangle + \langle X_2,u'\rangle u + X_3|u|^2\big)dS.
\end{aligned}
\end{equation}
It follows from a density argument  that the previous estimate also holds for functions in $H^2(Q)$ with null Cauchy data in $\partial Q(0)\cap\partial\Omega$.

Let $u$ be as in the hypothesis of the lemma. Then, $u$ satisfies an inequality of the form \eqref{car3}, without the interior integral in the right hand side. Noticing that the boundary integral does not depends on $\tau$, we let $\tau$ goes to infinity and conclude that $u=0$ in $Q(0)$.
\end{proof}


The aim of the second lemma is to extend the time for which $u$ is zero. Based again on Carleman estimates we will be able to succeed until we hit the characteristic surface associated to the principal part of $\mathcal{P}_\Phi$, this is the surface $\{(t,x): T-t=\text{dist}(x,\partial\Omega)\}$. 

\begin{lemma}\label{lemma2}
Let $\Omega$ and $T$ be as in Theorem \ref{uniqueness}. If $u\in H^2(Q)$ is a solution of \eqref{eq_uniq}, then 
$$u=0\quad\text{in}\quad\{(t,x)\in Q:\text{\rm dist}(x,\partial\Omega)<\epsilon,\;0<t<T-\text{\rm dist}(x,\partial\Omega)\}$$
for some $0<\epsilon\leq T$.

\end{lemma}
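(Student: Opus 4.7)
The plan is to extend the vanishing set of $u$ from the initial lens $Q(0)$ furnished by Lemma~\ref{lemma1} forward in time by a sliding-family Carleman argument, continuing until the sliding region first meets the characteristic surface $\{t+\text{dist}(x,\partial\Omega)=T\}$ of the principal part of $\mathcal{P}_\Phi$.

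I would introduce a one-parameter family of weights
$$\varphi_\lambda(x,t) = (R-x^n)^2 - \alpha(t-\lambda)^2 - r^2,\qquad \lambda\in[0,\lambda_*],$$
with $R$, $r=R-\delta$, and $\alpha$ tuned exactly as in Lemma~\ref{lemma1} so that the level surfaces $\{\varphi_\lambda=\text{const}\}$ remain pseudoconvex with respect to $\mathcal{P}$; this property is invariant under the time translation $t\mapsto t-\lambda$ because the convexity condition \eqref{convexity_covec} depends only on $x$. The positive-level sets
$$Q_\lambda := \{(t,x)\in Q : \varphi_\lambda(x,t) > 0\}$$
are lenses centered at $\{t=\lambda\}\times\partial\Omega$; at $\lambda=0$ we have $Q_0=Q(0)$ (where $u\equiv 0$ by Lemma~\ref{lemma1}), and for nearby $\lambda'<\lambda$ the past face of $Q_\lambda$ sits inside $Q_{\lambda'}$.

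Repeating the pointwise Carleman estimate and the partition-of-unity argument of Lemma~\ref{lemma1} with $\varphi_\lambda$ in place of $\varphi$ produces the analogue of \eqref{car3} on each $Q_\lambda$, with constants uniform in $\lambda\in[0,\lambda_*]$. The convolution contribution is controlled by an estimate of the shape
$$\int_{Q_\lambda} e^{2\tau\varphi_\lambda}\,|\Phi*u|^2\,d\sigma \leq C\int_{Q_\lambda} e^{2\tau\varphi_\lambda}\,|u|^2\,d\sigma,$$
obtained by Fubini after bounding the inner integral $\int_{s}^{T}e^{2\tau\varphi_\lambda(t,x)}\,dt$ by a constant times $e^{2\tau\varphi_\lambda(s,x)}$ on the relevant part of the domain and using the continuity of $\Phi$. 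Together with the first- and zeroth-order contributions of $\mathcal{P}_\Phi$, this convolution term is absorbed by the dominant $\tau^3\int e^{2\tau\varphi_\lambda}|u|^2\,d\sigma$ on the left-hand side once $\tau$ is taken large enough.

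Using that the Cauchy data of $u$ on $\partial\Omega$ vanish on the relevant portion (inherited from the hypotheses of Theorem~\ref{uniqueness} through Lemma~\ref{lemma1}) and that $u\equiv 0$ on the portion of $\partial Q_\lambda$ lying in $\bigcup_{\lambda'<\lambda} Q_{\lambda'}$ once smaller values of the parameter have been handled, the surviving boundary terms of the Carleman estimate are $\tau$-independent. Letting $\tau\to\infty$ then forces $u\equiv 0$ on each $Q_\lambda$, and a standard openness-closedness argument in $\lambda$ yields $u\equiv 0$ on $\bigcup_{\lambda\in[0,\lambda_*]} Q_\lambda$. By tuning $\delta=R-r$ small (so each lens is thin in $x^n$) and $\alpha$ small (so each lens is long in $t$), this union covers a region of the form $\{\text{dist}(x,\partial\Omega)<\epsilon,\ 0<t<T-\text{dist}(x,\partial\Omega)\}$ for some $\epsilon\in(0,T]$, which is precisely the claim. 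The main obstacle is coordinating the parameters $R$, $\delta$, $\alpha$, and the maximal sliding value $\lambda_*$ so that pseudoconvexity and the uniform absorption of the memory term both persist across the entire family while the swept-out region exhausts the target wedge up to the characteristic surface.
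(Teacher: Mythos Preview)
There is a genuine gap in the handling of the memory term. The bound you claim,
\[
\int_{s}^{T} e^{2\tau\varphi_\lambda(t,x)}\,dt \;\leq\; C\,e^{2\tau\varphi_\lambda(s,x)}
\]
with $C$ independent of $\tau$, requires $\varphi_\lambda(\cdot,x)$ to be nonincreasing on $[s,T]$. This is precisely what made the absorption work in Lemma~\ref{lemma1}: there $\varphi_0(x,t)=(R-x^n)^2-\alpha t^2-r^2$ is decreasing on $[0,T]$, so for every $(t,x)\in Q(0)$ and every $s\in[0,t]$ one has both $(s,x)\in Q(0)$ and $\varphi_0(s,x)\geq\varphi_0(t,x)$. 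After the time translation, however, $\varphi_\lambda$ peaks at $t=\lambda$, and for $s<\lambda$ the displayed inequality fails by the factor $e^{2\tau\alpha(s-\lambda)^2}$, which blows up with $\tau$. Appealing to the induction hypothesis does not rescue this: writing $\rho(x)=\sqrt{((R-x^n)^2-r^2)/\alpha}$, one sees that for $x$ with $2\rho(x)<\epsilon$ (i.e.\ $x^n$ close to $\delta$) the known vanishing region $\{t<\lambda+\rho(x)\}$ and the new lens $Q_{\lambda+\epsilon}=\{|t-\lambda-\epsilon|<\rho(x)\}$ are separated by a gap $(\lambda+\rho(x),\,\lambda+\epsilon-\rho(x))$. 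The convolution $\Phi*u(t,x)$ at points of $Q_{\lambda+\epsilon}$ samples $u$ on that gap, where it is neither known to vanish nor covered by the Carleman inequality, so the term cannot be absorbed into the left-hand side and the openness step does not close. (Note also that, since $Q_\lambda\subset\bigcup_{\mu<\lambda}Q_\mu$, the Carleman estimate on $Q_\lambda$ itself yields nothing beyond the inductive hypothesis; all the content must lie in the passage from $\lambda$ to $\lambda+\epsilon$.)

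The paper's proof takes a genuinely different route. It abandons the quadratic-in-$t$ weight and works near each point $(t_0,x_0)$ with a weight built from $\psi(t,x)=(\epsilon_2-x^n)(T-t-x^n)$, which is \emph{linear and strictly decreasing in $t$}. Such a $\varphi$ is only pseudoconvex on the conormal set $\{\theta=0\}$, so the ordinary pointwise Carleman estimate is replaced by Tataru's pseudo-differential Carleman estimate \eqref{tataru_carleman}, involving the Gaussian regularizer $E=e^{\epsilon|D_t|^2/(2\tau)}$. It is exactly this combination of a $t$-monotone weight with the Tataru machinery that allows the nonlocal memory term to be controlled, following the argument of \cite{2007Bukhgeim}. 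The obstruction you met is intrinsic to using a weight that is strictly pseudoconvex on the full characteristic set of $\mathcal{P}$: any such weight must be strictly concave in $t$, hence non-monotone once translated, and the causal structure of $\Phi*u$ is then incompatible with absorption.
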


\begin{proof}
From Lemma \ref{lemma1}, $u=0$ in some neighborhood $\{(t,x)\in Q: (R-x^n)^2>\alpha t^2 + r^2\}$ for appropriate constants $\alpha,R,r$. It is clear that for sufficiently small $\epsilon_1,\epsilon_2>0$, the previous set contains $[0,\epsilon_1]\times\{x\in\Omega:\text{dist}(x,\partial\Omega)<\epsilon_2\}$.

In a neighborhood of $\partial \Omega$ we define
\begin{equation}\label{psi}
\psi(t,x) := (\epsilon_2-x^n)(T-t-x^n),
\end{equation}
and for $\gamma>0$ we consider the sets
$$Q_{\gamma}^{\epsilon_2} := \{(t,x)\in Q\;|\; \psi(t,x)>\gamma,\; x^n<\epsilon_2\},$$
which exhaust $Q^{\epsilon_2} = \{(t,x)\in Q\;|\; x^n<\epsilon_2,\;0<t< T-x^n\}$, this is $Q^{\epsilon_2} = \bigcup_{\gamma>0}Q_{\gamma}^{\epsilon_2}$. Moreover, there exists $\gamma_0>0$ such that 
$$\emptyset\neq Q_{\gamma_0}^{\epsilon_2} \subset [0,\epsilon_1]\times\{x\in \Omega : x^n<\epsilon_2\}.$$

We denote by $B(t_0,x_0;r)$ the ball centered at $(t_0,x_0)$ and radius $r$ for the euclidean metric. Given the following 
\vspace{.5em}

\noindent{\bf Claim.} {\it Suppose that for $(t_0,x_0)\in Q^{\epsilon_2}$, $u$ vanishes below the level surface $\{\psi(x,t) = \psi(t_0,x_0)\}$ near $(t_0,x_0)$, this is in $Q_{\psi(t_0,x_0)}^{\epsilon_2}\cap B(t_0,x_0;r)$ for some $r>0$. Then, $u=0$ in a neighborhood of $(x_0,t_0)$. }
\vspace{.5em}

\noindent the proof of the lemma is complete by the next argument. Let's assume that $\text{supp}u\cap Q^{\epsilon_2}\neq \emptyset$. We can find $0<\gamma^*\leq\gamma_0$ such that
$$\text{supp}u\cap Q_{\gamma}^{\epsilon_2} = \emptyset,\;\forall \gamma>\gamma^*\quad\text{and}\quad \text{supp}u\cap \{(t,x)\in Q^{\epsilon_2} : \psi(t,x)=\gamma^*\}\neq \emptyset.$$
The application of the claim on every contact point $(t^*,x^*)\in \text{supp}u\cap \{(t,x)\in Q^{\epsilon_2} : \psi(t,x)=\gamma^*\}$, contradicts the choice of $\gamma^*$. Consequently, we deduce that $u=0$ on every $Q_{\gamma}^{\epsilon_2}$, $\gamma>0$, and therefore $u = 0$ in $Q^{\epsilon_2}$.

It only remains to show the previous claim. Here is where Carleman estimates play a fundamental role, and as before we will consider a particular choice of weight function which needs to fulfill a pseudo-convex condition with respect to $\mathcal{P} = \partial^2_t - \partial_{x^j}\big(g^{ij}\partial_{x^i}\cdot\big)$, in the set $\{(0,\xi)\in T^*_{(t_0,x_0)} \Omega\}$. Moreover, we will take it to be linear and non-increasing in time. Provided the above, it is possible to apply a pseudo-differential Carleman estimate introduced in \cite{Tataru1} and conclude that $u$ vanishes near $(t_0,x_0)$.

Let's consider local coordinates in $\partial\Omega$ near some $y\in\partial\Omega$ such that in those coordinates $y = (x_0',0)$. For some $\delta>0$ to be appropriately chosen, we define the following weight function
$$\varphi(t,x) = \psi(t,x)-\psi(t_0,x_0) - \frac{1}{2}\delta|x-x_0|^2$$
where here $|\cdot|$ stands for the euclidean norm and $\psi$ as in \eqref{psi}. 
Denoting the principal symbol of $\mathcal{P}$ by $p(t,x;\theta,\xi) = -\theta^2 + |\xi|^2_g$, where $|\xi|^2_g = g^{ij}(x)\xi_i\xi_j$ is the norm on covectors induced by $g$, the pseudo-convexity condition requires to show that $\varphi$ satisfies
\begin{enumerate}
\item[(1)] $\text{Re}\{\bar{p},\{p,\varphi\}\}(t_0,x_0;0,\xi)>0 \text{ for all } \xi\neq 0 \text{ such that } p(t_0,x_0;0,\xi) = 0,$
\item[(2)] $\frac{1}{i\tau}\{\bar{p}_\varphi,p_\varphi\}(t_0,x_0;0,\xi;\tau)>0\text{ for all } \xi\neq 0,\;\tau>0\\\text{ such that }p_\varphi(t_0,x_0;0,\xi,\tau)=0.$
\end{enumerate}Here $p_\varphi(t_0,x_0;0,\xi;\tau) = p(t,x; \theta + \text{i}\tau\varphi_t, \xi + \text{i}\tau\varphi_x)$ and $\{\cdot,\cdot\}$ is the Poisson bracket 
$$\{f,h\} = \sum_{j=1}^n\frac{\partial f}{\partial \xi_j}\frac{\partial h}{\partial x_j} - \frac{\partial f}{\partial x_j}\frac{\partial h}{\partial \xi_j} + \frac{\partial f}{\partial \theta}\frac{\partial h}{\partial t} - \frac{\partial f}{\partial t}\frac{\partial h}{\partial \theta}.$$
Recall that we are working in boundary normal coordinates hence the metric $g$ takes the form \eqref{metric_nbc}. 
The first condition is trivially fulfilled since the principal symbol $p$ is elliptic in the set $\{\theta = 0\}$. Let's use the following notation: the variable appearing in the subindex means we are differentiating with respect to such variable, for instance $\varphi_{x'} = \partial_{x'}\varphi$ and $\varphi_{tx^n} = \partial_{t}\partial_{x^n}\varphi$. To verify the second condition we notice first that $\varphi_{x'}(t_0,x_0) = \psi_{x'}(t_0,x_0) = 0$, $\varphi_{x^n}(t_0,x_0) = \psi_{x^n}(t_0,x_0)= -\alpha$ and $\varphi_t(t_0,x_0) = \psi_t(t_0,x_0) = -\beta$ where $\alpha>\beta>0$. In fact,
$$\alpha = (\epsilon_2-x^n_0) + (T-t_0-x^n_0),\quad\text{and}\quad \beta = \epsilon_2-x^n_0.$$
Also, denoting $\delta_{ij}$ the Kronecker delta,
$$\varphi_{tt} = 0,\quad \varphi_{tx^i} = \delta_{in},\quad\varphi_{x^ix^j} = 2\delta_{in}\delta_{jn} - \delta\cdot\delta_{ij} .$$
Secondly, it is easy to check that $p_\varphi(t_0,x_0;0,\xi,\tau)=0$ is equivalent to $\xi_n = 0$ and $|\xi'|^2_{\tilde{g}} = \tau^2(\alpha^2-\beta^2)$.
Then, after some tedious computations, in the set of points $(t_0,x_0;0,\xi;\tau)$ such that $p_\varphi = 0$, we get
\begin{align*}
\frac{1}{i\tau}\{\bar{p}_\varphi ,p_\varphi\} &= \frac{1}{\tau}\{\text{Re}p_\varphi,\text{Im}p_\varphi\}\\
&=8\tau^2(\alpha^2 - \alpha\beta)  + 4\alpha\left(\frac{1}{2}\partial_n\tilde{g}^{ij}\right)\xi'_i\xi'_j - \delta M,
\end{align*}
with $M = 4\tau^2\alpha^2 + 4\left(\tilde{g}^{jk}\xi'_j\right)\left(\tilde{g}^{ik}\xi'_i\right)$ such that, for some $C>0$,
$$M\leq4\tau^2(\alpha^2 + C(\alpha^2-\beta^2)).$$
Let's recall the positive-definiteness of the second fundamental form in \eqref{convexity_covec}, and denote $\kappa = \min_{s\in[0,\epsilon_2]}\kappa_{s}$.
By choosing $\delta>0$ small enough we obtain that
$$\frac{1}{i\tau}\{\bar{p}_\varphi ,p_\varphi\} \geq 8\tau^2\alpha(\alpha - \beta)\left(1+\frac{\kappa}{2}(\alpha + \beta)\right) - 4\delta\tau^2(\alpha^2 + C(\alpha^2-\beta^2))>0,$$
therefore $\varphi$ satisfies the second condition of pseudo-convexity. It follows from \cite[Theorem 3]{Tataru2} that there exists $\eta,C,d>0$ such that any function $v$ supported inside $B(t_0,x_0;\eta)$ (we of course choose $0<\eta<r$), for which the RHS of the next inequality is finite, satisfies the pseudo-differential Carleman estimate
\begin{equation}\label{tataru_carleman}
\tau^{-1}\|Ee^{\tau\varphi}v\|^2_{(2,\tau)}\leq C\left(\|Ee^{\tau\varphi}P v\|^2 + e^{-d\epsilon\tau}\|e^{\tau\varphi}P v\|^2 + e^{-d\epsilon\tau}\|e^{\tau\varphi}v\|^2_{(1,\tau)}\right),
\end{equation}
for the weighted norms
$$\|v\|^2_{(m,\tau)} := \sum_{|\alpha|+j\leq m}\tau^{2(m-|\alpha|-j)}\|D^\alpha D^j_tv\|^2_{L^2(\RR^{n+1})},\quad\tau>0;\quad\|\cdot\|:=\|\cdot\|_{(0,\tau)},$$
and the pseudo-differential operator $E :=e^{\frac{\epsilon}{2\tau}|D_t|^2}$. This operator can also be considered as the convolution operator
$$Ev(x,t) = \left(\frac{\tau}{2\pi\epsilon}\right)^{1/2}\int e^{-\frac{\tau|t-s|^2}{2\epsilon}}v(x,s)ds.$$
We would like to apply the above Carleman estimate to $u$ and eventually deduce that $u$ vanishes near $(t_0,x_0)$. With that in mind we need first to localize it near $(t_0,x_0)$. As in \cite{2007Bukhgeim}, in $(\psi'(t_0,x_0))^{\perp} = \{(\theta,\xi):\langle \psi'(t_0,x_0),(\theta,\xi)\rangle_{e\otimes g} = 0\}$ we see that $|\theta|\leq C_1|\xi|_g$, hence
$$\langle (\psi-\varphi)''(\theta,\xi),(\theta,\xi)\rangle_g = \delta|\xi|^2_g\geq c_2|(\theta,\xi)|^2_{e\otimes g}.$$
Therefore, by choosing $l_1<0$ small enough in magnitude, the set $\{\varphi(t,x)> l_1\}\cap\{\psi(t,x)<\psi(t_0,x_0)\}$ is contained in a sufficiently small vicinity of $(t_0,x_0)$. We then localize $u$ by multiplying it with a function of the form $\chi(\varphi(t,x))$ with $\chi\in C^\infty(\RR)$ a nondecreasing function such that
$$\chi(s) = \left\{\begin{matrix}0&\text{for }s< l_1,\\ 1&\text{for }s>l_2 ,\end{matrix}  \right.$$
where $l_1<l_2<0$ are small enough in magnitude, then
$$\text{supp}\big[u(t,x)\chi(\varphi(t,x))\big]\subset B(t_0,x_0;\eta).$$ 
In what follows we write $\chi$ meaning the composition $\chi\circ\varphi$.
Consequently, $v = \chi u$ satisfies the inequality \eqref{tataru_carleman}.
We include the integral term in the estimates by noticing that 
$$\mathcal{P}(\chi u) = \chi \mathcal{P}u + [\mathcal{P},\chi]u = \chi \mathcal{P}_\Phi u - \chi \Phi*u + \mathcal{P}_1u,$$
where $\mathcal{P}_1$ is a differential operator of order 1 with coefficients supported in $\{(t,x)|\varphi(t,x)<l_2\}$. Consequently
\begin{equation}\label{tataru_carleman2}
\begin{aligned}
\tau^{-1}\|Ee^{\tau\varphi}(\chi u)\|^2_{(2,\tau)}\leq c\Big(\|E&e^{\tau\varphi}\mathcal{P}_1 (\chi u)\|^2 + \|Ee^{\tau\varphi}\chi(\Phi*u)\|^2\\
&+\left. e^{-d\epsilon\tau}\|e^{\tau\varphi}\mathcal{P} (\chi u)\|^2 + e^{-d\epsilon\tau}\|e^{\tau\varphi}(\chi u)\|^2_{(1,\tau)}\right).
\end{aligned}
\end{equation}
The idea in what remains of the proof is to estimate $\|Ee^{\tau\varphi}(\chi u)\|_{(2,\tau)}$ by a term of the form $e^{l\tau}$, with $l<0$, and use \cite[Proposition 4.1]{Tataru1} to conclude that $\chi u=0$ in $\{(t,x)|\varphi(t,x)>l\}$. Such estimate is obtained in exactly the same way as in the proof of Lemma 6 in \cite{2007Bukhgeim}, where everything reduces to estimate the term with the convolution since the other terms in the right hand side of the last inequality are easily bounded. For the arguments needed to conclude the claim we refer the reader to \cite{2007Bukhgeim}.
\end{proof}

\noindent{\it Proof of Theorem \ref{uniqueness}.}
Let $u$ be a solution of $P_\Phi = 0$ with initial conditions $[f,-af]$ and such that $\Lambda_\Phi u = 0$. Due to our assumption on the coefficients of $P_\Phi$, $u$ solves $(\partial^2_t-\Delta)u=0$ in $(0,T)\times(\RR^n\setminus\overline{\Omega})$ with null initial and Dirichlet boundary data. Then, for any $x_0\in \RR^n\setminus\overline{\Omega}$, $u$ vanishes in $(0,T)\times V$ for some small neighborhood $V$ of $x_0$ such that $\overline{V}\cap\overline{\Omega} = \emptyset$. The previous is a consequence of a sharp domain of dependence for the wave operator in the exterior problem (see \cite[Proposition 2]{Finch_Rakesh_2004}). Then $u=0$ in $(0,T)\times(\RR^n\setminus\Omega)$ which implies null Neumann data, $\frac{\partial u}{\partial\nu}\big|_{(0,T)\times\partial\Omega}=0$. 

Let's set 
\begin{equation}\label{u_bar}
\bar{u}(t,x) = \int^t_0u(s,x)ds\quad\text{and}\quad \Psi(t,x) = -\int^\infty_t\Phi(s,x)ds.
\end{equation}
Note that $\bar{u}_t(t,x) = u(t,x)$ and $\partial_t\Psi = \Phi$.
Moreover
$$\partial_t\Big( \int^t_0\Psi(t-s,x)\bar{u}_s(s,x)ds\Big) = \Psi(0,x)\bar{u}_t(t,x) + \int^t_0\Phi(t-s,x)\bar{u}_s(s,x)ds,$$
which, since $\bar{u}(0,x) = 0$ and integration by parts, implies
\begin{equation}
\begin{aligned}\label{Phi_Psi}
\int^t_0\Big( \int^\tau_0\Phi&(\tau-s,x)u(s,x)ds\Big) 
&=  \int^t_0\Phi(t-s,x)\bar{u}(s,x)ds.
\end{aligned}
\end{equation}
We integrate equation \eqref{TAT2} on the interval $(0,t)$ for any $t>0$. It follows from the previous computations that $\bar{u}$ solves a system of the form \eqref{eq_uniq} 
with vanishing Cauchy data. In addition, notice that $\bar{u}_{tt} = u_t\in L^2(Q)$, so using equation \eqref{eq_uniq} we get $c^{2}\Delta \bar{u}\in L^2(Q)$, which by elliptic regularity implies $\bar{u}\in H^2(Q)$. 
We can now apply Lemma \ref{lemma2} on $\bar{u}$ and conclude that $u = 0$ in a set of the form $\{(t,x)\in Q: x^n<\epsilon\; 0<t<T-x^n\}$. This implies we have reduced the problem to the smaller domain $[0,T-\epsilon]\times\{x\in \Omega: x^n>\epsilon\}$. If $\epsilon=T$ we are done, otherwise we can apply again Lemma \ref{lemma2} in the new domain. Iterating this process we conclude the result. \hfill $\blacksquare$\\

There is a common condition appearing in the literature of Carleman estimates and inverse problems related to the wave equation with variable sound speed. It assumes the existence of some $x_0\in\RR^n$ for which
\begin{equation}\label{cond_c}
(x-x_0)\cdot\partial_xc(x)<c(x) \quad\forall x\in\RR^n.
\end{equation}
In geometric terms, \eqref{cond_c} says that the spheres with center at $x_0$ are strictly convex for the metric $c^{-2}dx^2$ \cite[\S 3]{2013StefanovUhlmann}. 
Such collection of spheres can then be used to foliate the domain $\Omega$ and, as you will see in the next corollary (see also Figure \ref{fig:foliation}), it allows us to prove unique continuation and consequently uniqueness for the inverse problem without the assumption of $\Omega$ and the level surfaces of the distance function, $\text{dist}(\cdot,\partial\Omega)$, being strictly convex. The price we pay by removing the convexity requirement on $\Omega$ is the lost of sharpness in the bound of $T$ that guarantee uniqueness. 

\begin{figure}
\centering
\includegraphics[width=1\textwidth]{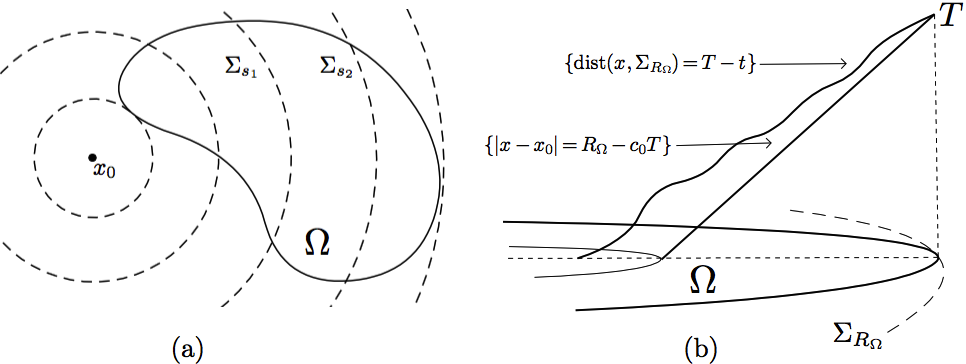}
\vspace{-.7em}
\caption{(a) Foliation of $\Omega$ by Euclidean spheres $\{\Sigma_s\}_s$ centered at $x_0$. (b) Sub-characteristic unique continuation under condition \eqref{cond_c}.}
\label{fig:foliation}
\end{figure}

Let $\Omega\subset\RR^n$ be an open and bounded subset with $\partial\Omega$ smooth, and $T>0$. We  assume the sound speed $c(x)$ satisfies condition \eqref{cond_c} and assume the constant $c_{0}>0$ is a lower bound for the sound speed. Let's denote 
$$R_{\Omega}=\max_{r>0}\{|x-x_0|:x\in\partial\Omega\},$$
$$r_{\Omega} = \left\{\begin{array}{ll} \displaystyle\min_{r>0}\{|x-x_0|:x\in\partial\Omega\},&\text{if }x\in\RR^n\backslash\bar{\Omega}\\
0,&\text{otherwise},\end{array}\right.$$
and $D_\Omega = R_\Omega-r_\Omega.$
\begin{theorem}\label{uniqueness2}
Assume $\Omega$, $T$ and $c$ are as above, and as in the TAT problem, we assume $P_\Phi = \partial^2_t - \Delta$ outside $\Omega$. If $u\in H^2(Q)$ is a solution of \eqref{eq_uniq} such that $u=\partial_\nu u=0$ on $(0,T)\times\partial\Omega$, then
$$u=0\quad\text{in}\quad \{(t,x)\in Q: 0<t<T-c^{-1}_0(R_{\Omega}-|x-x_0|)\}.$$
As a consequence, in the thermoacoustic problem, if $f\in H_{D}(\Omega)$ is such that $\Lambda_\Phi \ff=0$, with $\ff = [f,-af]$, then 
$$f=0\quad\text{in}\quad \{x\in\Omega: |x-x_0|>R_\Omega-c_{0}T\},$$
and in particular, $f\equiv 0$ when $T\geq c^{-1}_0D_\Omega.$
\end{theorem}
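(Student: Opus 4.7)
The plan is to repeat the two-stage Carleman argument of Theorem \ref{uniqueness}, but with the foliation by level sets of $\text{dist}(\cdot,\partial\Omega)$ replaced by the foliation by Euclidean spheres $\Sigma_s=\{|x-x_0|=s\}$, $s\in[r_\Omega,R_\Omega]$. The geometric content of \eqref{cond_c} is precisely that these spheres are strictly convex for the metric $g=c^{-2}dx^2$ (as recalled in \cite[\S 3]{2013StefanovUhlmann}), and this was the only ingredient of Lemmas \ref{lemma1} and \ref{lemma2} beyond the generic Carleman machinery that was specific to the particular foliation chosen there. The price for the more general domain is that the characteristic speed must be controlled globally by the uniform lower bound $c\geq c_0$ rather than by the local speed, and this accounts for the factor $c_0^{-1}$ in the conclusion.

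For the first assertion (vanishing of $u$ with null Cauchy data on $(0,T)\times\partial\Omega$) I would first establish a local unique continuation result modelled on Lemma \ref{lemma1} with the weight $\varphi(t,x)=(R_\Omega-|x-x_0|)-\alpha t^2-r^2$, $r$ close to $R_\Omega$, so that $\{\varphi>0\}$ is a thin shell in $\Omega$ touching $\partial\Omega\cap\Sigma_{R_\Omega}$. In coordinates adapted to $\{\Sigma_s\}$, where $g$ takes a form analogous to \eqref{metric_nbc} with transverse variable $|x-x_0|$, conjugation of $\mathcal{P}$ by $e^{\tau\varphi}$ produces a pointwise estimate of type \eqref{ptwise est} whose only curvature-dependent contribution is nonnegative by \eqref{cond_c}. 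A partition of unity on $\partial\Omega\cap\Sigma_{R_\Omega}$, a density argument, and $\tau\to\infty$ then give $u=0$ in a one-sided neighborhood of $\partial\Omega\cap\Sigma_{R_\Omega}$ in $\Omega$. To propagate the vanishing inward I would use an analogue of Lemma \ref{lemma2} with weight $\psi(t,x)=(s_\ast-|x-x_0|)\bigl(T-t-c_0^{-1}(R_\Omega-|x-x_0|)\bigr)$, whose level surfaces are non-characteristic for $\partial_t^2-c^2\Delta$ thanks to $c(x)^{-1}\leq c_0^{-1}$. The pseudo-convexity conditions (1)--(2) for $\varphi(t,x)=\psi(t,x)-\psi(t_0,x_0)-\tfrac12\delta|x-x_0|^2$ reduce, after essentially the same calculation of $\{\bar p_\varphi,p_\varphi\}$ done in Lemma \ref{lemma2}, to positivity of the second fundamental form of $\Sigma_s$ in the $g$-metric, i.e.\ to \eqref{cond_c}; Tataru's estimate \eqref{tataru_carleman} and the compactness/iteration argument of Lemma \ref{lemma2} then yield $u=0$ throughout $\{0<t<T-c_0^{-1}(R_\Omega-|x-x_0|)\}$.

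The second assertion reduces to the first exactly as in the proof of Theorem \ref{uniqueness}. The sharp exterior domain-of-dependence result \cite[Proposition 2]{Finch_Rakesh_2004}, together with $\Lambda_\Phi\ff=0$, gives $u=0$ in $(0,T)\times(\RR^n\setminus\Omega)$, hence $u=\partial_\nu u=0$ on $(0,T)\times\partial\Omega$. The integrated function $\bar u$ of \eqref{u_bar}, together with identity \eqref{Phi_Psi} and elliptic regularity, lies in $H^2(Q)$ and solves an equation of type \eqref{eq_uniq} with kernel $\Psi$ and null lateral Cauchy data. Applying the first assertion to $\bar u$, differentiating in $t$, and letting $t\to 0^+$ produces $f=0$ on $\{|x-x_0|>R_\Omega-c_0 T\}$, which exhausts $\bar\Omega$ whenever $T\geq c_0^{-1}D_\Omega$ since $|x-x_0|\geq r_\Omega$ on $\bar\Omega$. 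The main obstacle I anticipate is the second pseudo-convexity check: one must carefully track the constant $c_0^{-1}$ in the linear time factor of $\psi$ so that the resulting sign of $\{\bar p_\varphi,p_\varphi\}$ delivers exactly the threshold $T\geq c_0^{-1}D_\Omega$ rather than a strictly larger one.
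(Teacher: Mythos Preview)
Your strategy---rerunning the Carleman analysis of Lemmas~\ref{lemma1}--\ref{lemma2} with the boundary-distance foliation replaced by Euclidean spheres---is more laborious than what the paper actually does, and it contains a technical slip that would need repair. The paper's route is much shorter: it first extends $u$ by zero to $(0,T)\times(\RR^n\setminus\Omega)$ (legitimate because of the null Cauchy data on $\partial\Omega$ and well-posedness of the exterior problem), so that $u$ solves \eqref{eq_uniq} in all of $\RR^n$ and in particular has vanishing Cauchy data on the \emph{full} sphere $\Sigma_{R_\Omega}$, not merely on $\partial\Omega\cap\Sigma_{R_\Omega}$. From there the paper does not redo any Carleman estimate; it simply invokes Lemma~\ref{lemma2} as a black box with $\Omega$ replaced by the Euclidean ball $B(x_0,r)$, whose boundary $\Sigma_r$ is strictly convex for $g$ by \eqref{cond_c}. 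Lemma~\ref{lemma2} then gives $u=0$ in a region described by the $g$-distance $\text{dist}(x,\Sigma_r)$, and the factor $c_0^{-1}$ enters only through the elementary comparison $\text{dist}(x,\Sigma_r)\leq c_0^{-1}(r-|x-x_0|)$, after which one iterates over decreasing $r$. Your reduction of the second assertion via $\bar u$ and \eqref{Phi_Psi} matches the paper.

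Two concrete problems with your version. First, the Euclidean radial coordinate $\rho=|x-x_0|$ is not a $g$-distance coordinate: in polar form $g=c^{-2}(d\rho^2+\rho^2\,d\omega^2)$, so the coefficient of $d\rho^2$ is $c^{-2}$, not $1$, and $g$ does \emph{not} take the form \eqref{metric_nbc} with transverse variable $\rho$. Hence the pointwise estimate \eqref{ptwise est} and the computation of $\tfrac{1}{i\tau}\{\bar p_\varphi,p_\varphi\}$ do not carry over verbatim to your proposed weights; you would either have to pass to true $g$-boundary-normal coordinates for each $\Sigma_r$ (which is exactly what the paper does by invoking Lemma~\ref{lemma2} on $B(x_0,r)$) or redo the pseudo-convexity check with the extra conformal factor present. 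Second, your Lemma~\ref{lemma1}-type initialization ``near $\partial\Omega\cap\Sigma_{R_\Omega}$'' is ill-posed: this intersection may be a single point, so a partition-of-unity argument on it does not make sense. The zero-extension trick in the paper bypasses this entirely by producing null Cauchy data on all of $\Sigma_{R_\Omega}$ at once, so no separate ``Lemma~\ref{lemma1} step'' is needed.
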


\begin{remark}
From \cite[Proposition 7.1]{UhlmannStefanov2012InsideOut}, the uniqueness time defined in Theorem \ref{uniqueness} satisfies $T_0< c^{-1}_0D_\Omega$.
\end{remark}

\begin{proof}
Let's extend $u$ to be zero outside $\Omega$ in the interval $[0,T]$. Due to the null Cauchy data, finite speed of propagation and the well-posedness of the exterior problem, $u$ solves \eqref{eq_uniq} in the whole space. Notice that in particular, $u=\partial_\nu u=0$ on the Euclidean sphere $\{x\in\RR^b:|x-x_0|=R_\Omega\}$, for all $t\in[0,T]$. $\partial_\nu$ stands for a generic exterior normal derivative.

We denote $\Sigma_r = \{x\in\RR^b:|x-x_0|=r\}$ the sphere of center $x_0$ and radius $r$, and we set $r_0=\max\{0,c_{0}T-D_\Omega\}$. By hypothesis, $\Sigma_r$ with $r\in[r_0,R_\Omega]$ are strictly convex surfaces for the metric $c^{-2}dx^2$ that foliate $\Omega$ (see Figure \ref{fig:foliation}(a)). For a given $r\in[r_0,R_\Omega]$, let's assume that 
$$u=\partial_\nu u=0\quad\text{on}\quad [0,T-c^{-1}_0(R_\Omega -r)]\times\Sigma_r.$$
Since $\Sigma_r$ is strictly convex we can apply Lemma \ref{lemma2} with $\Omega$ replaced by $B(x_0,r)$, the Euclidean ball of center $x_0$ and radius $r$, and deduce that $u=0$ in
$$\{(t,x)\in (0,T)\times B(x_0,r): \text{dist}(x,\Sigma_r)<\epsilon,\; t<T-c^{-1}_0(R_\Omega-r)-\text{dist}(x,\Sigma_r)\},$$
for some $\epsilon>0$. Recalling that $c_{0}$ is a lower bound for $c$, we have that 
$$\text{dist}(x,\Sigma_r)<c^{-1}_0(r-|x-x_0|)\quad\forall x\in B(x_0,r),$$ 
therefore we can find $r_1\in(0,r)$ such that $u$ vanishes in the smaller set
$$\{(t,x): r_1<|x-x_0|< r,\; 0<t<T-c^{-1}_0(R_\Omega-|x-x_0|)\}.$$
Moreover, $u$ has null Cauchy data on $\Sigma_{r_1}$ for all $t\in(0,T-c^{-1}_0(R_\Omega-r_1))$ (see Figure \ref{fig:foliation}(b)).

If we denote by $s$ the infimum of the radius $r\geq r_0$ for which $u$ has vanishing Cauchy data in $(0,T-c^{-1}_0(R_\Omega - r))\times\Sigma_{r}$, by the first paragraph and the previous argument we know $s< R_\Omega$ (since $T>0$). Moreover, if $s>r_0$, it must also satisfies the same property, this is, $u=\partial_\nu u=0$ in $\Sigma_s$ for all $t\in(0,T-c^{-1}_0(R_\Omega - s))$. Consequently, we can still apply the arguments in the paragraph above which leads us to conclude $s=r_0$.

Let now $f\in\mathcal{H}_{D,a}(\Omega)$ be as in the hypothesis, and $u$ solution of \eqref{TAT2}. Analogously to the proof of Theorem \ref{uniqueness}, the function $\bar{u}$ defined in \eqref{u_bar} satisfies a system of the form \eqref{eq_uniq} with null Cauchy data. The result then follows directly from the previous.
\end{proof}
\section{Stability}
The stability with complete data follows directly from the analogous results for the damped and undamped case. Due to the microlocal nature of this property, the minimum time needed to recover $f$ in a stable way is usually larger than the uniqueness time. Indeed, it's necessary to capture information coming from every singularity of the initial source. In a non-trapping domain, such lower bound is related to the value
$$T_1(\Omega) = \sup\{|\gamma|_{g}: \gamma \subset \bar{\Omega} \text{ geodesic for the metric } g = c^{-2}dx^2\},$$
being $\frac{1}{2}T_1$ when there is no damping coefficient and exactly $T_1$ for the damped case. Notice that $T_1>2T_0$ and in the case $c$ satisfies \eqref{cond_c}, $T_1/2\leq (R_\Omega-r_\Omega)/(\alpha c_{0})$ with (see \cite[Proposition 7.1]{UhlmannStefanov2012InsideOut})
\begin{equation}\label{alpha}
\alpha = \min_{x\in\bar{\Omega}}(1-c^{-1}(x-x_0)\cdot\partial_xc)>0.
\end{equation}

\begin{theorem}
Let $\Omega$ be strictly convex for the metric $g=c^{-2}dx^2$. Assume that $\Omega$ and $T$ are as in Theorem \ref{uniqueness}  (or as in Theorem \ref{uniqueness2}). In addition, assume $T_1(\Omega)<T<\infty$ if $a\neq 0$ and $\frac{1}{2}T_1(\Omega)<T<\infty$ otherwise (resp. $2\alpha^{-1}c^{-1}_0D_\Omega<T<\infty$ and $\alpha^{-1}c^{-1}_0D_\Omega<T<\infty$). Then there exists $C>0$ such that
$$\|f\|_{H_{D}(\Omega)}\leq C\|\Lambda_\Phi f\|_{H^1((0,T)\times\partial\Omega)}.$$
\end{theorem}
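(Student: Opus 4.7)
The plan is to treat $\Lambda_\Phi$ as a compact perturbation of the measurement operator $\Lambda_0$ obtained by setting $\Phi\equiv 0$ in \eqref{TAT2}, and then combine a known stability estimate for $\Lambda_0$ with the injectivity already furnished by Theorems \ref{uniqueness} and \ref{uniqueness2}. Under the hypotheses on $T$ in the statement, the stability of $\Lambda_0$, namely
$$\|f\|_{H_D(\Omega)}\leq C_0\,\|\Lambda_0 f\|_{H^1(\Sigma)}\quad\text{with }\ff=[f,-af],$$
follows from \cite{TAT} in the undamped case and from \cite{2016Palacios} in the damped case. In the setting of Theorem \ref{uniqueness2}, the bound $T_1(\Omega)/2\leq \alpha^{-1}c_0^{-1}D_\Omega$ coming from \cite[Prop.~7.1]{UhlmannStefanov2012InsideOut} allows us to invoke those results under the quantitative time threshold appearing here.

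Next I would analyze the difference operator $K:=\Lambda_\Phi-\Lambda_0$. Letting $u$ and $u_0$ denote the solutions of \eqref{TAT2} with kernel $\Phi$ and with $\Phi\equiv 0$, respectively, both starting from $\ff=[f,-af]$, the difference $v:=u-u_0$ satisfies the damped wave equation
$$(\partial_t^2-c^2\Delta+a\partial_t+b)v=-\Phi*u,\qquad v|_{t=0}=0,\quad v_t|_{t=0}=0.$$
The temporal smoothness $\Phi\in C^2$ gives the Leibniz-type identity $\partial_t(\Phi*u)=\Phi(0,\cdot)u+(\partial_t\Phi)*u$, and together with the analogous expression $\nabla_x(\Phi*u)=(\nabla_x\Phi)*u+\Phi*\nabla_x u$, this shows that $f\mapsto\Phi*u$ is bounded from $H_D(\Omega)$ into $H^1([0,T]\times\Omega)$, via the well-posedness bounds of Section~2. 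Standard energy estimates and elliptic regularity applied to the wave equation with $H^1$ source then place $v$ in $H^2([0,T]\times\Omega_R)$ on any large ball $\Omega_R\supset\Omega$, so the trace $Kf=v|_\Sigma$ belongs to $H^{3/2}(\Sigma)$ with $\|Kf\|_{H^{3/2}(\Sigma)}\leq C\,\|f\|_{H_D(\Omega)}$. Composition with the compact embedding $H^{3/2}(\Sigma)\hookrightarrow H^1(\Sigma)$ on the bounded cylinder $\Sigma$ yields compactness of $K:H_D(\Omega)\to H^1(\Sigma)$.

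With the stability of $\Lambda_0$ and the compactness of $K$ in hand, a Riesz--Fredholm contradiction finishes the argument. Assuming the desired estimate fails, one extracts $\{f_n\}\subset H_D(\Omega)$ with $\|f_n\|_{H_D(\Omega)}=1$ and $\|\Lambda_\Phi f_n\|_{H^1(\Sigma)}\to 0$; compactness gives a subsequence with $Kf_n\to g$ in $H^1(\Sigma)$, hence $\Lambda_0 f_n=\Lambda_\Phi f_n-Kf_n\to-g$. The stability of $\Lambda_0$ then forces $\{f_n\}$ to be Cauchy in $H_D(\Omega)$, with unit-norm limit $f^*$ satisfying $\Lambda_\Phi f^*=0$; Theorem \ref{uniqueness} (or Theorem \ref{uniqueness2}) produces the contradiction $f^*\equiv 0$. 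The main obstacle in this plan is precisely the compactness of $K$: one must justify a genuine gain of one Sobolev degree at the boundary, which relies both on the $C^2$-smoothness of $\Phi$ (to differentiate through the convolution) and on $H^2$-regularity for the damped wave equation with $H^1$ source — neither of which is immediate from the weak well-posedness framework of Section~2.
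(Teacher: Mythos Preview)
Your approach is essentially the same as the paper's: write $\Lambda_\Phi$ as the $\Phi\equiv 0$ measurement operator plus a compact remainder, invoke the known stability of the former, and finish via the injectivity of $\Lambda_\Phi$ from Theorems~\ref{uniqueness}/\ref{uniqueness2}. The paper cites \cite{TATbrain} and \cite{Homan} for the reference stability estimate and \cite[Prop.~V.3.1]{Taylor} for the abstract ``injective plus compact perturbation'' step, in place of your explicit Cauchy-sequence contradiction, but these are interchangeable.

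One small simplification that dissolves the obstacle you flag at the end: to get the trace of $v=u-u_0$ into $H^{3/2}(\Sigma)$ you do not need $\Phi*u\in H^1_{t,x}$. It is enough that $F:=-\Phi*u$ and $F_t=-\Phi(0,\cdot)u-(\partial_t\Phi)*u$ both lie in $L^2((0,T);L^2)$, which follows immediately from $u\in L^2((0,T);L^2)$ and $\Phi\in C^1$; the higher-regularity theorem for the wave equation (\cite[\S7.2.3, Thm.~5]{evans1998partial}) then already gives $v\in C([0,T];H^2)$ and $v_t\in C([0,T];H^1)$ on a large ball, hence $v|_\Sigma\in H^{3/2}(\Sigma)$. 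No spatial differentiation through the convolution and no $H^2$-theory beyond the standard Evans result are needed.
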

\begin{proof}
The idea is to compare the observation operator $\Lambda_\Phi$ with its analogous for the undamped and damped case, $\Lambda_0$ and $\Lambda_a$ respectively. These last two operators are known to be stable maps (see \cite{TATbrain} and \cite{Homan} respectively) and furthermore, from the results of the previous section, we know $\Lambda_\Phi$ is injective. The proof then reduces to show that the respective error operators are compact. We only show this for the case $a\equiv 0$, the proof when there is a damping coefficient is obtained analogously.

From \cite{TATbrain} follows there is a constant $C>0$ such that
$$\|f\|_{H_{D}}\leq C\|\Lambda_0f\|_{H^1}\leq C\|\Lambda_\Phi f\|_{H^1} + C\|(\Lambda_\Phi - \Lambda_0)f\|_{H^1}.$$
Let's denote $R = \Lambda_\Phi - \Lambda_0$ and $u$ the attenuated wave related with $\Lambda_\Phi$. Then, $R$ maps $f\in H_D(\Omega)$ to the boundary data $w|_{(0,T)\times\partial\Omega}$ of the system
\begin{equation}
\left\{\begin{array}{ll} (\partial^2_t-c^{2}\Delta+b)w = -\Phi*u,&(t,x)\in(0,T)\times\RR^{n}\\ w|_{t=0}=0,\\ w_t|_{t=0}=0.\end{array}\right.
\end{equation}
By finite propagation speed we can work in a larger domain $\Omega'$ such that $w =u= 0$ on its boundary and outside $\Omega'$. Due to the higher regularity theorem in \cite[\S 7.2.3 Theorem 5]{evans1998partial}, since $F(t,x) = -[\Phi*u](t,x)$ satisfies $F,F_t\in L^2((0,T);L^2(\Omega'))$, we obtain that $w\in C((0,T);H^2(\Omega'))$ and $w_t\in C((0,T);H^1(\Omega'))$, and consequently the trace of $w$ in $\partial\Omega$ belongs to $H^{3/2}((0,T)\times\partial\Omega)$, with the latter space compactly embedded in $H^1((0,T)\times\partial\Omega)$. 

The stability inequality is obtained by recalling the injectivity of $\Lambda_\Phi$ from Theorem \ref{uniqueness} (respectively Theorem \ref{uniqueness2}) and applying the classical result \cite[Proposition V.3.1]{Taylor}. 
\end{proof}
\section{Reconstruction}

We aim to construct a Neumann series that allow us to recover $f$ in \eqref{TAT2} from boundary measurements as it has been done in \cite{TAT,TATbrain,2015AcostaMontalto,2015StefanovYang,2015Nguyen} for the unattenuated case, and in \cite{Homan,2016Palacios} for the damped wave equation. However, due to the convolution term we need to modified the equation satisfied by the time reversed wave. Considering the same equation in the backward direction would imply the knowledge of the future. The strategy then is to solve a time reversal problem in such a way that the initial energy of the error function is bounded by the total energy (kinetic, potential and energy lost by attenuation) of the forward wave, inside the domain and at time $T$, analogously as the argument presented in \cite{2016Palacios}. Such total energy in the whole space has the attribute of being conserved in time, fact that allows us to reduce the proof to an estimate involving the norm of the initial source and the energy of the forward wave outside $\Omega$ (see Proposition \ref{prop}). The estimate says that at time $T$ a significant portion of the energy lies outside the domain. It was first used in \cite{TATbrain} and subsequently applied in \cite{2016Palacios}.

Let's introduce the following convolution-type operator
\begin{equation}\label{kernel Psi}
[\Phi\tilde{*}v](s,x) = \int^T_s\Phi(t-s,x)v(t,x)dt,
\end{equation}
which is the adjoint operator of $\Phi*(\cdot)$ under the $L^2$ inner product in $(0,T)$, this is, for any $L^2$-functions $u,v$, 
\begin{equation}\label{identity1}
\langle \Phi*u,v\rangle_{L^2(0,T)} = \langle u,\Phi\tilde{*}v\rangle_{L^2(0,T)}.
\end{equation}
Indeed, denoting by $\chi_I$ the indicator function in the interval $I\subset\RR$,
\begin{align*}
\int^{T}_{0}\left[\Phi*u\right](t)v(t)dt &= \int\int\chi(t)_{[0,T]}\chi(s)_{[0,t]}\Phi(t-s)u(s)v(t)dsdt\\
&= \int\int\chi(s)_{[0,T]}\chi(t)_{[s,T]}\Phi(t-s)u(s)v(t)dsdt \\
&= \int^{T}_{0}\big[\Phi\tilde{*}v](s)u(s)ds.
\end{align*}

Following the same approach than the latest results in reconstruction for TAT in the enclosure case as well as in the attenuated case for the damped wave equation, the idea is to consider the right back projection system that will make the error operator to be a contraction.
In the same way as in the proof of uniqueness, instead of working with $u$ with set
$$\bar{u}(t,x) = \int^t_0u(s,x)ds,$$
and $\Psi(t,x)$ as in \eqref{def:kernel2}.

Then, they satisfy 
\begin{equation}\label{eq_bar_u}
\left\{\begin{array}{rl} \partial_t^2\bar{u}- c^2\Delta\bar{u} +a\partial_t\bar{u} +p\bar{u}+ \Psi*\partial_t\bar{u}=0&\text{in } (0,T)\times\RR^n,\\ \bar{u}|_{t=0}=0&\text{in }\RR^n\\ \partial_t\bar{u}|_{t=0}=f&\text{in }\RR^n\end{array}\right.
\end{equation}
with $p(x) = b(x) - \Psi(x,0)\geq 0$. Notice we do not use \eqref{Phi_Psi} to obtain an equation as in \eqref{eq_uniq} and we keep a derivative inside the convolution. If $\bar{\Lambda}_\Psi:L^2(\Omega;c^{-2}dx)\to H^1((0,T)\times\partial\Omega)$ denotes the observation operator for this problem, this is $\bar{\Lambda}_\Psi f = \bar{u}|_{(0,T)\times\partial\Omega}$, by well-posedness of the direct problem we have the following relation, 
$$\bar{\Lambda}_\Psi f = \int^t_0[\Lambda_\Phi f](t)dt,\quad\forall f\in H_{D}(\Omega).$$
For the data $\bar{h} = \bar{\Lambda}_\Psi f$, we consider the solution $v$ of the system
\begin{equation}\label{TR}
\left\{\begin{array}{rcl}
(\partial^2_t-c^{2}\Delta -a\partial_t+ p - \Psi\tilde{*}\partial_t)v&=& 0\; \text{ in }(0,T)\times\Omega,\\
v|_{t=T}&=&\phi,\\
v_t|_{t=T}&=&0,\\
v|_{(0,T)\times\partial\Omega}&=&\bar{h},
\end{array}\right.
\end{equation}
with $\phi$ the harmonic extension of $\bar{h}(T,\cdot)$ in $\Omega$. Notice that problem \eqref{TR} is well-posed. This is due to the convolution term that involves values of $v$ in the interval $(t,T)$, thus by doing the change of variables $t\to T-t$ we get an IBVP of the form \eqref{eq_bar_u} which is uniquely solvable. We define the Time Reversal operator by 
$$A:H^1_{(0)}([0,T]\times\partial\Omega)\to L^2(\Omega;c^{-2}dx),\quad Ah = v_t(0,\cdot),$$
and denote by $K$ the error operator defined as follows,
$$K:L^2(\Omega;c^{-2}dx)\to L^2(\Omega;c^{-2}dx),\quad Kf = w_t(0,\cdot),$$
with $w = \bar{u}-v$, the error function that solves problem \eqref{w eq}.

In what follows we suppose the domain $\Omega$ is non-trapping (i.e. $T_0(\Omega)<\infty$). The main result of this section is the next
\begin{theorem}\label{teo:Nseries}
Let $\Omega$ be strictly convex for the metric $g=c^{-2}dx^2$. Assume that $\Omega$ and $T$ are as in Theorem \ref{uniqueness}  (or as in Theorem \ref{uniqueness2}). In addition, assume $T_1(\Omega)<T<\infty$ if $a\neq 0$ and $\frac{1}{2}T_1(\Omega)<T<\infty$ otherwise (resp. $2\alpha^{-1}c^{-1}_0D_\Omega<T<\infty$ and $\alpha^{-1}c^{-1}_0D_\Omega<T<\infty$, with $\alpha$ as in \eqref{alpha}). Then $A\bar{\Lambda}_\Psi = \text{Id}-K$ with $\|K\|_{\mathcal{L}(L^2(\Omega;c^{-2}dx))}<1$, and for any initial condition of \eqref{TAT2} of the form $\ff=(f,-af)$ with $f\in H_{D}(\Omega)$, the thermoacoustic inverse problem has a reconstruction formula given by
$$f=\sum^\infty_{m=0}K^mA\bar{h},\quad \bar{h} = \bar{\Lambda}_\Psi f.$$
\end{theorem}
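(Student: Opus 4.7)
The identity $A\bar\Lambda_\Psi=\text{Id}-K$ is built into the definitions: if $\bar u$ solves \eqref{eq_bar_u} with data $(0,f)$, $\bar h=\bar\Lambda_\Psi f$, and $v$ solves \eqref{TR}, then $Kf=w_t(0,\cdot)=\bar u_t(0,\cdot)-v_t(0,\cdot)=f-A\bar h$. Once the contraction $\|K\|_{\mathcal L(L^2(\Omega;c^{-2}dx))}<1$ is established, the Neumann series $\sum_{m\ge 0}K^m$ converges and provides the inverse of $A\bar\Lambda_\Psi$, giving the reconstruction formula $f=\sum_m K^m A\bar h$. Thus the whole content of the theorem is the contraction bound.

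I plan to obtain it by combining two ingredients. The first is a bilinear energy identity yielding
\[
\|Kf\|_{L^2(\Omega;c^{-2}dx)}^{2} \;\le\; \mathcal E_\Omega^{\Psi}(\bar u,T),
\]
where $\mathcal E^{\Psi}$ denotes the extended-energy functional \eqref{ext_eng} with $\Psi$ substituted for $\Phi$. I would test the $\bar u$-equation against $c^{-2}v$ and the $v$-equation against $c^{-2}\bar u$, subtract, and integrate over $(0,T)\times\Omega$. The lateral boundary contributions drop out because $w|_{(0,T)\times\partial\Omega}=0$; the damping pieces combine after integrating $a\partial_t\bar u$ and $-a\partial_t v$ by parts; and the two memory terms cancel thanks to the adjoint relation \eqref{identity1} between $\Psi*\cdot$ and $\Psi\tilde*\cdot$. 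What survives is a Green-type identity whose $t=0$ trace produces $\|w_t(0)\|_{L^2(c^{-2})}^{2}$ (using $\bar u(0)=0$), while its $t=T$ trace is dominated by $\mathcal E^{\Psi}_\Omega(\bar u,T)$; the harmonic-extension property of $\phi$ enters through $\|\nabla(\bar u(T)-\phi)\|_{L^2(\Omega)}\le \|\nabla\bar u(T)\|_{L^2(\Omega)}$ to absorb the terminal Dirichlet data.

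The second ingredient is conservation of $\mathcal E^{\Psi}$ on all of $\RR^n$, valid under Condition~\ref{cond_att}: since $\bar u$ satisfies a free wave equation outside $\bar\Omega$,
\[
\mathcal E^{\Psi}_\Omega(\bar u,T)+E_{\RR^n\setminus\Omega}(\bar u(T),\bar u_t(T))=\mathcal E^{\Psi}_{\RR^n}(\bar u,0)=\|f\|_{L^2(\Omega;c^{-2}dx)}^{2}.
\]
Under the temporal hypothesis (the convexity setting of Theorem~\ref{uniqueness}, or the geodesic setting of Theorem~\ref{uniqueness2}, combined with the geodesic-length lower bound on $T$ appearing in the statement), the proposition immediately preceding this theorem --- an adaptation of \cite[Prop.~3]{TATbrain} and its damped analogue in \cite{2016Palacios} to the memory setting --- supplies $\rho>0$ with $E_{\RR^n\setminus\Omega}(\bar u(T),\bar u_t(T))\ge\rho\|f\|_{L^2(c^{-2})}^{2}$. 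Combining gives $\|Kf\|^2\le(1-\rho)\|f\|^2$, hence $\|K\|\le\sqrt{1-\rho}<1$.

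The principal obstacle is the bilinear identity in the first step. Because the forward and time-reversed equations differ in sign on both $a\partial_t$ and the memory term, $w=\bar u-v$ does not solve a closed PDE, so no direct energy estimate on $w$ alone is available. The pairing of $\bar u$ against $v$ is indispensable, and the adjoint structure engineered into \eqref{TR} (with $-a\partial_t$ and $-\Psi\tilde*\partial_t$) is precisely what forces the memory cross-terms to cancel rather than compound. Verifying the sharp form of the estimate at $t=T$, with a constant small enough to combine with the outside-energy bound from the proposition and yield a genuine contraction, is the delicate step.
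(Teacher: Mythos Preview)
Your overall architecture---bound $\|Kf\|^2$ by the extended energy $\mathcal E_\Omega(\bar u,T)$, use conservation to rewrite this as $\|f\|^2-E_{\Omega^c}(\bar u,T)$, then invoke the proposition---is exactly the paper's. The problem is the mechanism you propose for the first step.

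Testing the $\bar u$-equation against $c^{-2}v$ and the $v$-equation against $c^{-2}\bar u$ does not do what you claim. The spatial boundary term from Green's formula is $\int_{\partial\Omega}(\partial_\nu v\,\bar u-\partial_\nu\bar u\,v)=\int_{\partial\Omega}\bar h\,\partial_\nu w$, which does \emph{not} vanish (it is $w$, not $\bar u$ or $v$, that is zero on the lateral boundary). The $t=0$ trace of $\partial_t(\bar u_t v-v_t\bar u)$ is $\int c^{-2}f\,v(0)$, not $\|w_t(0)\|^2_{L^2(c^{-2})}$. And the memory cross terms $\int(\Psi*\bar u_t)v+(\Psi\tilde*v_t)\bar u$ do not cancel via \eqref{identity1}---that identity pairs the same second argument, whereas here you have $v$ in one and $\bar u$ in the other.

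More importantly, your diagnosis that ``no direct energy estimate on $w$ alone is available'' is mistaken, and this is precisely what the paper does. The error $w$ satisfies $(\partial_t^2-c^2\Delta+p)w=-a(\bar u_t+v_t)-(\Psi*\bar u_t+\Psi\tilde*v_t)$ with homogeneous Dirichlet data; multiply by $2c^{-2}w_t$ and integrate over $(0,T)\times\Omega$. Now the lateral boundary term is $\int\partial_\nu w\,w_t=0$. In the source integrals expand $w_t=\bar u_t-v_t$: the diagonal pieces in $\bar u_t$ assemble into the attenuation part of $\mathcal E_\Omega(\bar u,T)$, the diagonal pieces in $v_t$ carry a favorable sign (using positive-definiteness of $-\Psi$ and $a\ge0$) and are dropped, and the genuine cross terms $\int(\Psi*\bar u_t)v_t$ and $\int(\Psi\tilde*v_t)\bar u_t$ now cancel exactly by \eqref{identity1}. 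Combined with the harmonic-extension inequality you already noted for the $t=T$ terms, this yields $E_\Omega(w,0)\le\mathcal E_\Omega(\bar u,T)$, and the rest of your outline goes through.
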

\begin{proof}
Notice the error function $w = \bar{u}-v$ satisfies the equation
\begin{equation}\label{w eq}
\left\{\begin{array}{rcl}
(\partial_t^2 - c^{-2}\Delta + p) w&=& -a\bar{u}_t - av_t-\Psi*\partial_t\bar{u}- \Psi\tilde{*}\partial_t{v}\; \text{ in }(0,T)\times\Omega,\\
w|_{t=T}&=&\bar{u}^T - \phi,\\
w_t|_{t=T}&=&\bar{u}^T_t,\\
w|_{(0,T)\times\Gamma}&=&0.
\end{array}\right.
\end{equation}
with $\bar{u}^T = \bar{u}(T,\cdot)$ and $\partial_t\bar{u}^T = \partial_t\bar{u}(T,\cdot)$.
Moreover, we can write 
$$Kf = f - A\bar{h} = w_t(0),\quad \text{with}\quad\bar{h} = \bar{\Lambda}_\Psi f.$$
We want to estimate the norm of $Kf$, hence we need to compute the energy of $w$. Multiplying \eqref{w eq} by $2c^{-2}w_t$ and integrating over $(0,T)\times\Omega$ we obtain
\begin{align*}
E_{\Omega}(w,0) &= E_{\Omega}(w,T) +2\int_{[0,T]\times\Omega}ac^{-2}\bar{u}_tw_tdxdt + 2\int_{[0,T]\times\Omega}ac^{-2}v_tw_tdxdt\\
&\hspace{1em}+ 2\int_{[0,T]\times\Omega}c^{-2}(\Psi*\partial_t\bar{u})w_tdxdt +2\int_{[0,T]\times\Omega}c^{-2}\big(\Psi\tilde{*}\partial_tv\big)w_tdxdt\\
&= E_{\Omega}(w,T) +2\int_{[0,T]\times\Omega}ac^{-2}|\bar{u}_t|^2dxdt - 2\int_{[0,T]\times\Omega}ac^{-2}|v_t|^2dxdt\\
&\hspace{1em}+ 2\int_{[0,T]\times\Omega}c^{-2}(\Psi*\partial_t\bar{u})\partial_t\bar{u}dxdt -  2\int_{[0,T]\times\Omega}c^{-2}(\Psi\tilde{*}\partial_tv)\partial_tvdxdt\\
&\hspace{1em} -2\int_{[0,T]\times\Omega}c^{-2}\left(\Psi*\partial_t\bar{u}\right)\partial_tvdxdt + 2\int_{[0,T]\times\Omega}c^{-2}\big(\Psi\tilde{*}\partial_tv\big)\partial_t\bar{u}dxdt.
\end{align*}
Neglecting the integration in the spatial variable in the last two terms for a moment, we can use the identity \eqref{identity1} which makes them cancel each other out. Furthermore, it follows from the same identity and Condition \eqref{hyp_att} on the kernels (which guarantees positive-definiteness) that
$$\int_{[0,T]\times\Omega}c^{-2}(\Psi\tilde{*}\partial_tv)\partial_tvdxdt = \int_{[0,T]\times\Omega}c^{-2}(\Psi*\partial_tv)\partial_tvdxdt\geq 0.$$
In consequence we get
\begin{equation}\label{eng1}
\begin{aligned}
E_{\Omega}(w,0) \leq E_{\Omega}(w,T) &+2\int_{[0,T]\times\Omega}ac^{-2}|\bar{u}_t|^2dxdt\\
&+ 2\int_{[0,T]\times\Omega}c^{-2}(\Psi*\partial_t\bar{u})\partial_t\bar{u}dxdt.
\end{aligned}
\end{equation}
The choice of the time reversal system \eqref{w eq} helps to minimize the total energy in the dynamic satisfied by the error function $w$ in a similar way as the functions $\phi$ helps to minimize the energy of $w$ at time $T$. Indeed, by integration by parts we have that
$$(\bar{u}^T-\phi,\phi)_{H_D(\Omega)} = -\int_{\Omega}(\bar{u}^T - \phi)\Delta \phi dx + \int_{\partial\Omega}(\bar{u}^T-\phi)\partial_\nu\phi dS = 0,$$
therefore
\begin{equation}\label{eng2}
E_\Omega(w(T)) = \|\bar{u}^T-\phi\|^2_{H_D(\Omega)}+\|\bar{u}^T_t\|^2_{L^2(\Omega)} = E_{\Omega}(\bar{u}(T)) - \|\phi\|_{H_D(\Omega)}^2.
\end{equation}
From the above relations \eqref{eng1} and \eqref{eng2}, we deduce
\begin{equation}\label{eng3}
\|Kf\|^2_{L^2(\Omega;c^{-2}dx)}\leq E_{\Omega}(w,0)\leq \mathcal{E}_{\Omega}(\bar{u},T),
\end{equation}
where recall the term in the right hand side is the extended energy functional associated to \eqref{TAT2} and defined in \eqref{ext_eng}.
By conservation of the extended energy in $\RR^n$, 
\begin{equation}\label{eng4}
\mathcal{E}_{\Omega}(\bar{u},T) = \mathcal{E}_{\RR^n}(\bar{u},T) - E_{\Omega^c}(\bar{u},T) = \|f\|^2_{L^2(c^{-2}dx)} - E_{\Omega^c}(\bar{u},T).
\end{equation}
The conclusion of the theorem follows from the next proposition which is known to hold when there is no integral term. 
\begin{proposition}\label{prop} There is $C>0$ so that for all $f\in L^2(\Omega;c^{-2}dx)$ and $\bar{u}$ solutions of \eqref{eq_bar_u},
$$\|f\|^2_{L^2(\Omega;c^{-2}dx)} \leq CE_{\Omega^c}(\bar{u},T).$$
\end{proposition}
An inequality of this form was first proved in \cite[Proposition 5.1]{TATbrain} (see (5.15) in the same article) for the case of the unattenuated wave equation, and later extended to the damped case in \cite[Proposition 2]{2016Palacios}, requiring a larger lower bound for the measurement time though. Such estimate is obtained by microlocalizing near the singularities and studying how their energy is transmitted across the boundary provided they hit the boundary in a transversal way. By considering strictly convex domains we can be sure that all singularities meet that requirement. When there is no damping coefficient the analysis of the singularities can be decoupled to those following the positive sound speed and negative sound speed. The time needed then for the estimate to hold equals the time needed to get at least one signal from each singularity of the initial condition, this is $T>\frac{1}{2}T_1(\Omega)$. In contrast, the appearance of a damping term makes no longer possible such microlocal decoupling, and therefore it makes necessary to wait until both signals, issued from every singularity of the initial condition, reach the boundary, or in other words $T>T_1(\Omega)$.

Let's prove the above proposition. Denote by $\bar{U}(x,t)$ the solution of the damped wave equation 
\begin{equation}\label{TAT_unatt}
\left\{\begin{array}{ll} (\partial^2_t+ a\partial_t- c^{2}\Delta + b)\bar{U}(t,x) = 0,&(t,x)\in(0,T)\times\RR^{n}\\ \bar{U}|_{t=0}=0,\\ \bar{U}_t|_{t=0}=f.\end{array}\right.
\end{equation}
Denoting $\ff = [0,f]\in \mathcal{H}(\Omega)$, from the paragraph above follows there is $C>0$ so that
$$\|f\|^2_{L^2(\Omega;c^{-2}dx)} = \|\ff\|^2_{\mathcal{H}(\Omega)}\leq CE_{\Omega^c}(U,T).$$
Furthermore, defining $\bar{W} = \bar{U}-\bar{u}$ we obtain
\begin{align*}
\|f\|^2_{L^2(\Omega;c^{-2}dx)}&\leq C\left(E_{\Omega^c}(\bar{u},T)+E_{\Omega^c}(\bar{W},T)\right)
\end{align*}
and letting $\bar{\uu}(t) = [\bar{u}(t),\bar{u}_t(t)]$, $\bar{\text{\bf W}}(t) = [\bar{W}(t),\bar{W}_t(t)]$, the previous inequality implies
\begin{align*}
\|f\|_{L^2(\Omega;c^{-2}dx)}\leq C\|\bar{\uu}(T)\|_{H^1(\Omega^c)\otimes L^2(\Omega^c)} + C\|\bar{\text{\bf W}}(T)\|_{H^1(\Omega^c)\otimes L^2(\Omega^c)},
\end{align*}
where the error function $\bar{W}$ satisfies the IVP
\begin{equation}\label{TAT_error}
\left\{\begin{array}{ll} (\partial^2_t+a\partial_t-c^{2}\Delta+b)\bar{W} = \Psi*\partial_t\bar{u},&(t,x)\in(0,T)\times\RR^{n}\\ \bar{W}|_{t=0}=0,\\ \bar{W}_t|_{t=0}=0.\end{array}\right.
\end{equation}
We claim the bounded map $L^2(\Omega;c^{-2}dx)\ni f\mapsto \bar{\uu}(T)\in H^1(\Omega^c)\otimes L^2(\Omega^c)$ is injective. In fact, it can be decomposed as the composition of two injective bounded maps, the first one being the observation operator $\bar{\Lambda}_\Psi$, which is injective since \eqref{eq_bar_u} is equivalent (following the computation in \eqref{Phi_Psi}) to a system of the form \eqref{eq_uniq} where the method used to prove Theorem \ref{uniqueness} (resp. Theorem \ref{uniqueness2}) can be applied, and our choice of $T>\frac{1}{2}T_1\geq T_0$ (resp. $T>\alpha^{-1}c^{-1}_0R_\Omega\geq \frac{1}{2}T_1$). The second map is the exterior IBVP map that takes Dirichlet boundary data $\bar{h}\in H^1_{(0)}([0,T]\times\partial\Omega)$ to $\bar{\vv}(T)\in H^1(\Omega^c)\otimes L^2(\Omega^c)$, where $\bar{v}$ solves:
\begin{equation}\label{exterior_IBVP}
\left\{\begin{array}{ll} (\partial^2_t - c^{2}\Delta) \bar{v}(t,x) = 0,&(t,x)\in(0,T)\times\RR^{n}\setminus \overline{\Omega}\\ \bar{v}|_{t=0}=0,\\ \partial_t\bar{v}|_{t=0}=0,\\
\bar{v}|_{[0,T]\times\partial\Omega} = \bar{h}.\end{array}\right.
\end{equation}
To see the injectivity of the latter map, consider $\bar{h}\in H^1_{(0)}([0,T]\times\partial\Omega)$ such that $\bar{v}(T) = \bar{v}_t(T) = 0$, with $\bar{v}$ solution of \eqref{exterior_IBVP}. By domain of dependence and reversibility in time of the exterior problem, we have that $\bar{v}$ vanishes in $\{(t,x)\in (0,\infty)\times\RR^n\backslash\bar{\Omega}: \text{dist}_e(x,\partial\Omega)>t\}$ and also in $\{(t,x)\in (0,\infty)\times\RR^n\backslash\bar{\Omega}: \text{dist}_e(x,\partial\Omega)>|T-t|\}$. Therefore 
$$\bar{v}=0\quad\text{in}\quad \{(t,x)\in (0,3T/2)\times\RR^n\backslash\bar{\Omega}: \text{dist}_e(x,\partial\Omega)>T/2\}.$$
\begin{figure}
\centering
\includegraphics[width=0.4\textwidth]{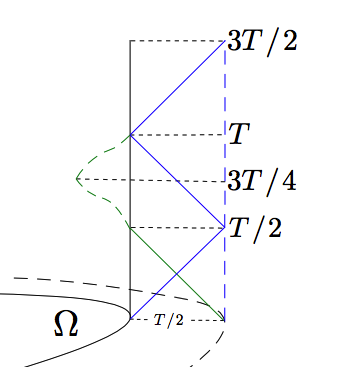}
\vspace{-.7em}
\caption{Unique continuation from points in $\{x\in\RR^n\backslash\bar{\Omega}:\text{dist}_e(x,\partial\Omega)>T/2\}$ implies null Cauchy data on $(T/2,T)\times\partial\Omega$.}
\label{fig:1}
\end{figure}
Applying Tataru's unique continuation theorem on any $p\in \{\text{dist}_e(x,\partial\Omega)>T/2\}$, we deduce that 
$$\bar{v} = 0 \quad\text{in}\quad (\RR^n\backslash\bar{\Omega})\cap\{(t,x)\in (0,\infty)\times\RR^n : |x-p| + |t-3T/4|<3T/4\},$$
which implies that $\bar{h}$ vanishes for $t\in(T/2,T)$ (see Figure \ref{fig:1}). We can now apply the same argument replacing $T$ by $T/2$ and get that $\bar{h}$ is null in the interval $(T/4,T/2)$. Iterating this process we finally conclude that $\bar{h} = 0$ for all $t\in(0,T)$.

Our second claim is that the map 
$$L^2(\Omega;c^{-2}dx)\ni f\mapsto \bar{\WW}(T)\in H^1(\Omega^c)\otimes L^2(\Omega^c)$$ 
is compact. It is in fact a composition of the bounded maps 
$$L^2(\Omega;c^{-2}dx)\ni f\mapsto \bar{u}_t\in L^2((0,T);L^2(\RR^n)), \quad \bar{u}_t\mapsto \bar{\WW}(T)\in H^2(\Omega^c)\otimes H^1(\Omega^c)$$ 
and the compact embedding 
$$H^2(\Omega^c)\otimes H^1(\Omega^c)\hookrightarrow H^1(\Omega^c)\otimes L^2(\Omega^c).$$ 
The continuity of the second map for those Sobolev spaces is due to \cite[\S 7.2.3 Theorem 5]{evans1998partial} since denoting $F := \Psi*\bar{u}_t$, then $F,F_t\in L^2((0,T);L^2(\Omega^c))$. 
It follows from \cite[Proposition V.3.1]{Taylor} that for a different constant
$$\|f\|_{L^2(\Omega;c^{-2}dx)}\leq C\|\bar{\uu}(T)\|_{H^1(\Omega^c)\otimes L^2(\Omega^c)}.$$
The proposition is then proved by recalling the finite speed of propagation and applying Poincare's inequality on a large ball minus $\Omega$. \hfill $\blacksquare$\\

We conclude the proof of Theorem \ref{teo:Nseries} by joining \eqref{eng3}, \eqref{eng4} and  Proposition \ref{prop}, hence for some $C>1$,
\begin{align*}
\|Kf\|^2_{L^2(\Omega;c^{-2}dx)}&\leq \|f\|^2_{L^2(\Omega;c^{-2}dx)} - E_{\Omega^c}(u,T)\\
&\leq \|f\|^2_{L^2(\Omega;c^{-2}dx)} - C^{-2}\|f\|^2_{L^2(\Omega;c^{-2}dx)} \\
&\leq (1-C^{-2})\|f\|^2_{L^2(\Omega;c^{-2}dx)}.
\end{align*}
\end{proof}

\section*{Appendix}

\subsection*{Well-posedness of the direct problem}
For the existence of solutions we follows the proof of \cite[Theorem 2.1]{1970Dafermos}.
Let's assume without lost of generality that $u_0 = 0$. For a fixed $t_0\in (0,T]$ let
$$\mathcal{E}_{t_0} = \{v(t)|v(t)\in C^\infty([0,t_0];H^1_0(U)), v(0) = 0\},$$
with two inner product given by
$$(v,w)_{1}:= \int^{t_0}_0\big\{(v_t(t),w_t(t)) \rangle + ( \nabla v(t),\nabla w(t))\big\}dt$$
and 
$$(v,w)_{2}:= (v,w)_1 + t_0 (v_t(0), w_t(0)),$$
and respective norms $\|\cdot\|_1$ and $\|\cdot\|_{2}$. Let $F_{t_0}$ be the completion of $\mathcal{E}_{t_0}$ under the norm $\|\cdot\|_1$. It can be proved, for instance by Stone-Weierstrass, that $u\in F_{t_0}$ is a generalized solution in the interval $[0,t_0]$ if and only if
\begin{equation}\label{weak_sol}
\mathcal{B}(u,v) = \mathcal{D}(f,v) + t_0(c^{-2}u_1,v_t(0))_{L^2(U)},\quad\forall v\in \mathcal{E}_{t_0},
\end{equation}
where
\begin{eqnarray*}
\mathcal{B}(u,v) &=&\int^{t_0}_0(t-t_0)\Big[ (c^{-2}u_t(t),v_{tt}(t)) - (\nabla u(t),\nabla v(t))- (c^{-2}au_t(t),v_t(t))\\
&& - (c^{-2}bu(t),v_t(t)) - \int^t_0(c^{-2}\Phi(t-\tau)u_\tau(\tau),u_t(t))d\tau\Big]dt\\
&& + \int^{t_0}_0(c^{-2}u_t(t),v_t(t))dt,\nonumber\\
\mathcal{D}(f,v) &=&-\int^{t_0}_0(t-t_0)(c^{-2}f(t),v_t(t))dt.
\end{eqnarray*}
where \eqref{weak_sol} is obtained by using the test function $(t-t_0)v_t(t)$ with $v\in \mathcal{E}_{t_0}$, in \eqref{def:weak_sol}. Notice that applying integration by parts we get that the bilinear form $\mathcal{B}$ satisfies that for all $v\in \mathcal{E}_{t_0}$ (recall $v(0) = 0$),
$$
\begin{aligned}
\mathcal{B}(v,v) =&\frac{1}{2}\int^{t_0}_0\left[(c^{-2}v_t(t),v_t(t)) + (\nabla v(t),\nabla v(t)) + (c^{-2}bv(t),v(t))\right]dt\\
& - \int^{t_0}_0(t-t_0)\Big[ (c^{-2}av_t(t),v_t(t)) - (c^{-2}\Phi(0)v(t),v(t))\\
&\hspace{5em}-\int^t_0(c^{-2}\Phi(t-s)v(s),v(t))ds \Big]dt\\
&+ \frac{t_0}{2}(c^{-2}v_t(0),v_t(0)).
\end{aligned}
$$
Therefore, recalling that $0<c_{0}\leq c\leq c^{-1}_0$, we bound from bellow and choosing $t_0>0$ small enough and using Poincare's inequality we get
$$
\begin{aligned}
\mathcal{B}(v,v) \geq& \frac{1}{2}\min\{1,c^2_0\}\|v\|^2_{2} - Ct_0\big([\|a\|_\infty + \|\Phi(0)\|_\infty + t_0\|\Phi\|_\infty\big)\|v\|^2_{1} \geq \delta\|v\|^2_{2}.
\end{aligned}
$$
for some $\delta>0$. 

On the other hand, 
$$|\mathcal{D}(f,v)|\leq t_0c^{-2}_0\|f\|_{L^2}\|v\|_2$$
$$|t_0(c^{-2}u_1,v_t(0))| \leq t_0^{1/2}c^{-2}_0\|u_1\|_{L^2}\|v\|_2$$
Then, similarly as in \cite[Chap. III, Theorem 1.1]{1961Lions}, we get the existence of weak solutions on the interval $[0,t_0]$. Iterating this argument for the intervals $[t_0,2t_0]$, $[2t_0,3t_0]$ etc, we conclude the existence on $[0,T]$. The uniqueness follows the same ideas as in \cite[Theorem 2.2]{1970Dafermos}.

\renewcommand{\refname}{\centerline{\large \sc References}}
\bibliographystyle{plain}
{\footnotesize
\bibliography{biblio}}
\end{document}